\documentclass[12pt,a4paper]{amsart}

\usepackage{amssymb, amsthm}

\setlength{\textwidth}{16cm}%
\setlength{\evensidemargin}{0cm}%
\setlength{\oddsidemargin}{0cm}%
\setlength{\textheight}{23cm}
\setlength{\topmargin}{0.5cm}

\newtheorem{theorem}{Theorem}[section]
\newtheorem{lemma}[theorem]{Lemma}
\newtheorem{prop}[theorem]{Proposition}

\theoremstyle{remark}
\newtheorem{remark}[theorem]{Remark}
\newtheorem*{remark*}{Remark}
\newtheorem{defn}[theorem]{Definition}

\newtheorem*{notation*}{Notation}

\def\N{{\mathbb N}} \def\C{{\mathbb C}}\def\T{{\mathbb T}}

\newcommand{\clsp}{\overline{\operatorname{span}}}
\newcommand{\lsp}{\operatorname{span}}

\newcommand{\Aut}{\operatorname{Aut}}
\newcommand{\FE}{\operatorname{FE}}
\newcommand{\Ext}{\operatorname{Ext}}
\newcommand{\MCE}{\operatorname{MCE}}

\numberwithin{equation}{section}

\title[Aperiodicity and cofinality for finitely aligned higher-rank graphs.]
    {Aperiodicity and cofinality for finitely aligned higher-rank graphs}
\author{Peter Lewin}
\email{asims@uow.edu.au}
\author{Aidan Sims}
\email{pkl621@uow.edu.au}
\address{ School of Mathematics and Applied Statistics\\
Austin Keane Building (15)\\ University of
Wollongong\\ Wollongong NSW Australia.}
\thanks{This research was supported by the Australian Research Council.}
\date{May 6, 2009}
\subjclass[2000]{46L05}%
\keywords{$k$-graph, $C^*$-algebra, graph algebra}%

\begin{document}

\begin{abstract}
We introduce new formulations of aperiodicity and cofinality
for finitely aligned higher-rank graphs $\Lambda$, and prove
that $C^*(\Lambda)$ is simple if and only if $\Lambda$ is
aperiodic and cofinal. The main advantage of our versions of
aperiodicity and cofinality over existing ones is that ours are
stated in terms of finite paths. To prove our main result, we
first characterise each of aperiodicity and cofinality of
$\Lambda$ in terms of the ideal structure of $C^*(\Lambda)$. In
an appendix we show how our new cofinality condition simplifies
in a number of special cases which have been treated previously
in the literature; even in these settings, our results are new.
\end{abstract}

\maketitle

\section{Introduction}
From the groundbreaking work of Cuntz and Krieger~\cite{ck} the
theory of Cuntz-Krieger algebras has been generalised through
the efforts of many authors to include $C^*$-algebras of finite
directed graphs \cite{ew}, infinite directed graphs \cite{flr,
kprr}, infinite $\{0,1\}$-matrices \cite{el}, ultragraphs
\cite{tomforde}, topological graphs and quivers \cite{katsura,
mt}, and higher-rank graphs \cite{kp}, to name a few. In this
paper we focus on the $C^*$-algebras of finitely aligned
higher-rank graphs \cite{fmy, rsy2}. In generalisations of
Cuntz-Krieger algebras, simplicity is characterised by two
conditions on the graph, now known as aperiodicity and
cofinality. Cofinality is traditionally phrased in terms of
infinite paths, and in the setting of higher-rank graphs, the
same is true of aperiodicity. This is problematic because,
especially in higher-rank graphs, the infinite paths in
question can be difficult to identify and work with. In this
paper we introduce new formulations of aperiodicity and
cofinality --- which involve only finite paths --- for finitely
aligned higher-rank graphs $\Lambda$, and prove that
$C^*(\Lambda)$ is simple if and only if $\Lambda$ is aperiodic
and cofinal. This generalises the results of \cite{robsi1,
robsi2} to finitely aligned $k$-graphs.

A directed graph $E$ is a quadruple $(E^0, E^1, r, s)$ where
$E^0$ and $E^1$ are countable sets and $r$ and $s$ are maps
from $E^1$ to $E^0$. The elements of $E^0$ are called vertices
and the elements of $E^1$ are called edges. For each edge $e
\in E^1$ the vertex $r(e)$ is called the range of $e$ and
$s(e)$ is called the source of $e$. We visualise the vertices
as dots and each edge $e$ as an arrow from $s(e)$ to $r(e)$.

In 1980, Enomoto and Watatani~\cite{ew} associated a
$C^*$-algebra to each finite directed graph $E$ with no sources
as follows. Suppose $H$ is a Hilbert space. Then a
\emph{Cuntz-Krieger $E$-family} on $H$ consists of a set $\{
P_v : v \in E^0 \}$ of mutually orthogonal projections on $H$
and a set $\{ S_e : e \in E^1 \}$ of partial isometries on $H$
satisfying
\begin{enumerate}
\item $S_e^* S_e = P_{s(e)}$ for every $e \in E^1$; and
\item $P_v = \sum_{\{ e \in E^1 : r(e) = v \}} S_e S_e^*$
    for all $v \in E^0$.
\end{enumerate}
The two relations above are now known as the Cuntz-Krieger
relations. Enomoto and Watatani's definition was subsequently
generalised by various authors to infinite graphs \cite{bhrs,
bprs, flr, kpr, kprr}. In all cases a key justification of the
chosen Cuntz-Krieger relations is the so-called Cuntz-Krieger
uniqueness theorem. For directed graphs this theorem states
that if every loop in $E$ has an entrance, then any two
Cuntz-Krieger $E$-families consisting of nonzero partial
isometries generate isomorphic $C^*$-algebras.				

A higher-rank graph, or $k$-graph, is an analogue of a directed
graph in which paths have a degree in $\N^k$ rather than a
length in $\N$. These $k$-graphs and their $C^*$-algebras were
introduced by Kumjian and Pask \cite{kp} as graph-based models
for the higher-rank Cuntz-Krieger algebras studied by Robertson
and Steger \cite{rob-steg}. For technical reasons, Kumjian and
Pask only considered $k$-graphs in which each vertex receives
at least one and at most finitely many paths of any given
degree; such $k$-graphs are said to be row-finite with no
sources. Subsequently Raeburn, Sims and Yeend generalised the
theory of $k$-graph $C^*$-algebras to finitely aligned
$k$-graphs \cite{rsy1, rsy2}.

In recent years $k$-graph algebras have attracted a great deal
of attention. Exel~\cite{exel} realises higher rank graph
algebras as combinatorial algebras and recovers the underlying
path space from the algebra. Farthing, Muhly and
Yeend~\cite{fmy} provide inverse semigroup and groupoid models
for higher rank graph algebras while Katsoulis and
Kribs~\cite{kk} explore the relationship between Cuntz-Krieger
algebras of higher rank graphs and nonselfadjoint operator
algebras. Many other authors have contributed both to the
fundamental theory of $k$-graph algebras \cite{burgstaller,
pqr, sz} and to its applications \cite{dpy, dmmc, hms,
spielberg}. 				

In their seminal paper, Kumjian and Pask proved a
generalisation of the Cuntz-Krieger uniqueness theorem for
row-finite $k$-graphs with no sources~\cite[Theorem~4.6]{kp}.
Informed by the original groupoid model for graph
$C^*$-algebras \cite{kpr}, Kumjian and Pask observed that in a
directed graph the hypothesis that every loop has an entrance
ensures that each vertex receives at least one infinite path
which is not equal to any left-shift of itself. This was the
formulation which they generalised to the higher-rank graph
setting, and has become known as the aperiodicity condition. In
particular, the aperiodicity condition in Kumjian and Pask's
theorem is phrased in terms of infinite paths. In \cite{fmy,
rsy2} the Cuntz-Krieger uniqueness theorem was further
generalised to $k$-graphs which admit sources as well as
vertices which may receive an infinite number of edges of the
same degree. Each new generalisation has necessitated a new
notion of an infinite path and hence a new notion of
aperiodicity, so several different notions of aperiodicity have
now appeared in the literature. Moreover, as the class of
$k$-graphs considered broadens, the associated collection of
infinite paths becomes more complicated, so the corresponding
aperiodicity condition becomes harder to verify.

The Cuntz-Krieger uniqueness theorem can be reinterpreted as
the assertion that any nontrivial ideal must contain a vertex
projection. From the Cuntz-Krieger uniqueness theorem, it is
typically a short step to a sufficient condition for
simplicity. One identifies a cofinality condition which implies
that an ideal containing one vertex projection must contain all
the others, and hence must be the whole $C^*$-algebra. For
row-finite graphs with no sources, the appropriate condition is
that for any vertex $v$ and any infinite path $x$ in the graph,
there exists a path with range $v$ whose source lies on $x$.
This condition, like aperiodicity, becomes more and more
complicated for more general versions of the theory because the
appropriate notion of an infinite path becomes more involved.
In particular, for finitely aligned higher-rank graphs, the
appropriate notion of cofinality, recently identified by
Shotwell~\cite{shotwell}, is potentially quite difficult to
check in examples.

In this paper, we improve on previous formulations of both
aperiodicity and cofinality with equivalent conditions which
only involve finite paths. In particular, our new conditions
are more easily verified in practice than their predecessors.
Our main result is Theorem~\ref{main}: for a finitely aligned
$k$-graph $\Lambda$, $C^*(\Lambda)$ is simple if and only if
$\Lambda$ is aperiodic and cofinal. As well as involving only
finite paths, this is an improvement on
\cite[Proposition~8.5]{sims2} where only a sufficient condition
is established. Our presentation is as self-contained as
possible, and we have largely chosen to present direct proofs
rather than appeal to existing results elsewhere in the
literature.

Our characterisation of simplicity is not entirely new: we
discovered late in the course of this research that Shotwell
has recently proved that $C^*(\Lambda)$ is simple if and only
if $\Lambda$ has no local periodicity and is cofinal in the
sense of \cite{sims2}. His work is available as a
preprint~\cite{shotwell}. However, even where our results
converge with Shotwell's, our approach is quite different. For
example, though we use the same notion of no local periodicity
as Shotwell to prove that no local periodicity combined with
cofinality in $\Lambda$ implies that $C^*(\Lambda)$ is simple,
our proof is completely different from his. Shotwell's approach
is to show that no local periodicity is equivalent to
Condition~(A) of \cite{fmy} and then appeal to existing
results, while our approach is via a direct argument which does
not appeal to any heavy machinery. A similar comparison applies
to the two proofs (ours and Shotwell's) that cofinality is
necessary for simplicity. Moreover, as mentioned above, our
definitions of aperiodicity and cofinality involve only finite
paths, which make them easier to work with.

We begin by setting up the background and notation needed for
the rest of the paper in Section~\ref{prelim}. In
Section~\ref{ilp} we introduce our new definitions of
aperiodicity and cofinality, and state our main result,
Theorem~\ref{main}. As a first step to proving this main
theorem, we show that our aperiodicity condition is equivalent
to Shotwell's no local periodicity condition, and hence to two
other aperiodicity conditions used elsewhere in the literature.
In Section~\ref{is} we explore the $C^*$-algebraic consequences
of aperiodicity of a $k$-graph. In Section~\ref{prf} we show
that our notion of cofinality is equivalent to the definition
of cofinality in~\cite{shotwell, sims2} and then explore the
relationship between cofinality of $\Lambda$ and the structure
of $C^*(\Lambda)$; we conclude Section~\ref{prf} with the proof
of our main result. We have also included an appendix in which
we indicate how our new cofinality condition simplifies in a
number of special cases; it is new even in these contexts.

\subsection*{Acknowledgements}
We record our gratitude to Jacob Shotwell, for keeping us
abreast of his research and supplying us with a preprint.

\section{Preliminaries}\label{prelim}

\subsection{$k$-graphs}

We regard $\N^k$ as a semigroup under addition, and use $e_i$
to denote the $i$th generator. For $m, n \in \N^k$ we write
$m_i, n_i$ for the $i$th coordinates of $m$ and $n$ and $m \vee
n$ for their coordinatewise maximum and $m \wedge n$ for their
coordinatewise minimum.

A $k$-graph $(\Lambda, d)$ is a category $\Lambda$ endowed with
a functor $d : \Lambda \rightarrow \N^k$ satisfying the
\emph{factorisation property}: for every $\lambda \in \Lambda$
and $m, n \in \N^k$ satisfying $d(\lambda) = m + n$ there exist
unique $\mu, \nu \in \Lambda$ such that $d(\mu) = m$, $d(\nu) =
n$ and $\lambda = \mu \nu$. We read $d(\lambda)$ as the
\emph{degree of $\lambda$} and think of $d$ as a generalised
length function. For $n \in \N^k$ we write $\Lambda^n$ for $\{
\lambda \in \Lambda : d(\lambda) = n \}$.

The factorisation property implies that for each $\lambda \in
\Lambda$ there exist unique elements $r(\lambda), s(\lambda)
\in \Lambda^0$ such that $\lambda = r(\lambda) \lambda =
\lambda s(\lambda)$, and also that $\Lambda^0 = \{ r(\lambda) :
\lambda \in \Lambda \} = \{ s(\lambda) : \lambda \in \Lambda
\}$. We call elements of $\Lambda^0$ vertices, and the
functions $r, s$ the \emph{range} and \emph{source} maps. Given
$k$-graphs $\Lambda$ and $\Gamma$, a $k$-graph morphism $\phi$
from $\Lambda$ to $\Gamma$ is a functor which preserves the
degree map.

For $\lambda \in \Lambda$ and $m \leq n \leq d(\lambda)$ we
define $\lambda(m, n)$ to be the unique path in $\Lambda^{n -
m}$ obtained from the factorisation property such that $\lambda
= \lambda' \big( \lambda (m, n) \big) \lambda''$ for some
$\lambda'\in \Lambda^m$ and $\lambda'' \in \Lambda^{d(\lambda)
- n}$. Unlike in \cite{rsy1, rsy2}, we do \textbf{not} write
$\lambda(0, n)$ to mean $\lambda \big( 0, n \wedge d(\lambda)
\big)$; so $\lambda(0, n)$ is undefined if $n \nleq
d(\lambda)$, and $d \big( \lambda(p, q) \big)$ is always equal
to $q - p$ when $\lambda(p, q)$ makes sense. For $n \in \N^k$
define
\[
\Lambda^{\leq n} :=
    \{ \lambda \in \Lambda : d(\lambda) \leq n, \text{ and } d(\lambda)_i < n_i \Rightarrow s(\lambda)\Lambda^{e_i} = \emptyset \}.
\]
For $v \in \Lambda^0$ and $X \subset \Lambda$, define $vX = \{
\lambda \in X : r(\lambda) = v \}$.
		
\subsection{Cuntz-Krieger families and $k$-graph $C^*$-algebras}

In this subsection we indicate how to associate a $C^*$-algebra
to a higher-rank graph.

Fix a $k$-graph $\Lambda$. For $\mu, \nu \in \Lambda$, we write
\begin{equation}
\MCE(\mu, \nu) := \{ \lambda \in \Lambda : d(\lambda) = d(\mu) \vee d(\nu), \lambda \big( 0, d(\mu) \big)
    = \mu, \lambda \big( 0, d(\nu) \big) = \nu \} \label{MCE}
\end{equation}
for the collection of \emph{minimal common extensions} of $\mu$
and $\nu$. We say that $\Lambda$ is \emph{finitely aligned} if
$|{\MCE(\mu, \nu)}| < \infty$ for all $\mu, \nu \in \Lambda$.

Let $\Lambda$ be a $k$-graph and fix $v \in \Lambda^0$ and $E
\subset v \Lambda$. We say that $E$ is \emph{exhaustive} if for
each $\mu \in v\Lambda$ there exists $\lambda \in E$ such that
$\MCE(\mu, \lambda) \neq \emptyset$. If $|E| < \infty$ we say
$E$ is \emph{finite exhaustive}. Define $\FE(\Lambda)$ to be
the set of finite exhaustive sets of $\Lambda$ and for each $v
\in \Lambda^0$, define $v \FE(\Lambda)$ to be the set of finite
exhaustive sets whose elements all have range $v$.

\begin{defn}[{\cite[Definition~2.5]{rsy2}}]\label{ckfam}
Let $\Lambda$ be a finitely aligned $k$-graph. A
\emph{Cuntz-Krieger $\Lambda$-family} is a set $\{ t_\lambda :
\lambda \in \Lambda \}$ of partial isometries satisfying:
\begin{enumerate}
\item[(CK1)] $\{ t_v : v \in \Lambda^0 \}$ is a collection
    of mutually orthogonal projections;
\item[(CK2)] $t_\mu t_\nu = t_{\mu \nu}$ whenever $s(\mu) =
    r(\nu)$;
\item[(CK3)] $t_\mu^* t_\nu = \sum_{\mu \alpha = \nu \beta
    \in \MCE(\mu, \nu)} t_\alpha t_\beta^*$ for every $\mu,
    \nu \in \Lambda$ and;
\item[(CK4)] $\prod_{\lambda \in E} (t_v - t_\lambda
    t_\lambda^*) = 0$ for every $v \in \Lambda^0$ and $E
    \in v \FE(\Lambda)$.
\end{enumerate}
\end{defn}

Given a finitely aligned $k$-graph $\Lambda$ there exists a
$C^*$-algebra $C^*(\Lambda)$ generated by a Cuntz-Krieger
$\Lambda$-family $\{ s_\lambda : \lambda \in \Lambda \}$ which
is universal in the following sense: given any other
Cuntz-Krieger $\Lambda$-family $\{ t_\lambda : \lambda \in
\Lambda \}$ there exists a unique homomorphism $\pi_t$ such
that $\pi_t (s_\lambda) = t_\lambda$ for every $\lambda \in
\Lambda$.

\begin{lemma}[{\cite[Lemma 2.7]{rsy2}}]\label{2.7}
Let $\Lambda$ be a finitely aligned $k$-graph and let $\{
t_\lambda : \lambda \in \Lambda \}$ be a Cuntz-Krieger
$\Lambda$-family. Then
\begin{enumerate}
\item[(i)] $t_\mu t_\mu^* t_\nu t_\nu^* = \sum_{\lambda \in
    \MCE(\mu, \nu)} t_\lambda t_\lambda^*$ for all $\mu,
    \nu \in \Lambda$. In particular, $\{ t_\lambda
    t_\lambda^* : \lambda \in \Lambda \}$ is a family of
    commuting projections.
\item[(ii)] For $\mu, \nu \in \Lambda^{\leq n}$, we have
    $t_\mu^* t_\nu = \delta_{\mu, \nu} t_{s(\mu)}$.
\item[(iii)] If $E \subset v \Lambda^{\leq n}$ is finite,
    then $t_v \geq \sum_{\lambda \in E} t_\lambda
    t_\lambda^*$.
\item[(iv)] $C^*(\{ t_\lambda : \lambda \in \Lambda \}) =
    \clsp\{ t_\mu t_\nu^* : \mu, \nu \in \Lambda \} =
    \clsp\{ t_\mu t_\nu^* : \mu, \nu \in \Lambda, s(\mu) =
    s(\nu) \}$.
\end{enumerate}
\end{lemma}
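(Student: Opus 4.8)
The plan is to derive all four statements from the Cuntz--Krieger relations, starting from the basic observation that (CK3) applied with $\mu = \nu$ gives $t_\lambda^* t_\lambda = t_{s(\lambda)}$, since $\MCE(\lambda,\lambda) = \{\lambda\}$ forces the factorisation $\alpha = \beta = s(\lambda)$. In particular each $t_\lambda$ is a partial isometry and each $t_\lambda t_\lambda^*$ is a projection. For (i) I would compute $t_\mu t_\mu^* t_\nu t_\nu^*$ by substituting (CK3) for the middle factor $t_\mu^* t_\nu = \sum_{\mu\alpha = \nu\beta \in \MCE(\mu,\nu)} t_\alpha t_\beta^*$ and then using (CK2) to collapse $t_\mu t_\alpha = t_{\mu\alpha}$ and $t_\beta^* t_\nu^* = t_{\nu\beta}^*$; since $\mu\alpha = \nu\beta = \lambda$ ranges over $\MCE(\mu,\nu)$, each summand becomes $t_\lambda t_\lambda^*$, giving the stated identity. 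As the right-hand side is symmetric in $\mu$ and $\nu$ (because $\MCE(\mu,\nu) = \MCE(\nu,\mu)$), the two range projections commute, which is the ``in particular'' claim.

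The substantive step is (ii), and I expect the analysis of $\MCE(\mu,\nu)$ for $\mu,\nu \in \Lambda^{\le n}$ to be the main obstacle. By (CK3) it suffices to show $\MCE(\mu,\nu) = \emptyset$ whenever $\mu \ne \nu$, the case $\mu = \nu$ already giving $t_{s(\mu)}$. So suppose $\lambda \in \MCE(\mu,\nu)$; then $d(\lambda) = d(\mu)\vee d(\nu) \le n$. If $d(\lambda)_i > d(\mu)_i$ for some $i$, then $d(\mu)_i < n_i$, so the defining property of $\Lambda^{\le n}$ forces $s(\mu)\Lambda^{e_i} = \emptyset$; but $\lambda = \mu\,\lambda\big(d(\mu),d(\lambda)\big)$ with the extending path having positive $i$th degree, and factoring an edge of degree $e_i$ off its front produces an element of $s(\mu)\Lambda^{e_i}$, a contradiction. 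Hence $d(\lambda) = d(\mu)$, and symmetrically $d(\lambda) = d(\nu)$; then $\mu = \lambda\big(0,d(\mu)\big) = \lambda = \lambda\big(0,d(\nu)\big) = \nu$, contradicting $\mu \ne \nu$. Thus $t_\mu^* t_\nu = \delta_{\mu,\nu} t_{s(\mu)}$.

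For (iii), part (ii) shows that the projections $\{t_\lambda t_\lambda^* : \lambda \in E\}$ are mutually orthogonal, since for distinct $\lambda,\lambda' \in E \subset v\Lambda^{\le n}$ we get $t_\lambda t_\lambda^* t_{\lambda'} t_{\lambda'}^* = t_\lambda\big(t_\lambda^* t_{\lambda'}\big) t_{\lambda'}^* = 0$; hence their sum is a projection. Moreover (CK2) gives $t_\lambda = t_v t_\lambda$ for $\lambda \in v\Lambda$, whence each $t_\lambda t_\lambda^* \le t_v$, and a sum of mutually orthogonal subprojections of $t_v$ is again dominated by $t_v$.

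Finally, for (iv) I would verify that $\lsp\{t_\mu t_\nu^* : \mu,\nu \in \Lambda\}$ is a $*$-subalgebra: it is visibly self-adjoint, it contains each generator (as $t_\lambda = t_\lambda t_{s(\lambda)}^*$), and it is closed under products because $(t_\mu t_\nu^*)(t_\sigma t_\tau^*) = \sum t_{\mu\alpha} t_{\tau\beta}^*$ after inserting (CK3) for $t_\nu^* t_\sigma$ and applying (CK2) --- a \emph{finite} sum precisely by finite alignment. Its norm closure is therefore $C^*(\{t_\lambda\})$, giving the first equality. For the second, I would note $t_\mu t_\nu^* = t_\mu t_{s(\mu)} t_{s(\nu)} t_\nu^*$, which vanishes by (CK1) whenever $s(\mu) \ne s(\nu)$; so the additional spanning elements are all zero and the two closed spans coincide.
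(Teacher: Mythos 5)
Your proof is correct. Note that the paper does not actually prove this lemma --- it imports it verbatim from \cite[Lemma~2.7]{rsy2} --- and your argument is essentially the standard one given there: (CK3) with $\mu=\nu$ to get $t_\lambda^*t_\lambda = t_{s(\lambda)}$, the observation that distinct elements of $\Lambda^{\leq n}$ admit no common extension (so $\MCE(\mu,\nu)=\emptyset$, giving (ii) and hence the orthogonality in (iii)), and the finite-alignment argument that $\lsp\{t_\mu t_\nu^*\}$ is a $*$-algebra for (iv).
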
 				

We have written~(CK3) and Lemma~\ref{2.7}(i) in terms of
$\MCE(\mu, \nu)$, whereas they are rendered in \cite{rsy2} in
terms of a different set, denoted $\Lambda^{\min}(\mu,\nu)$.
The two definitions are equivalent because, for fixed $\mu, \nu
\in \Lambda$ the map $(\alpha, \beta) \mapsto \mu \alpha$ is a
bijection between $\Lambda^{\min} (\mu, \nu)$ and $\MCE(\mu,
\nu)$. We will avoid reference to $\Lambda^{\min}$ in this
paper to reduce the amount of notation required as much as
possible.

\subsection{The $\partial \Lambda$ representation}

We construct for each finitely aligned $k$-graph $\Lambda$ a
Cuntz-Krieger $\Lambda$-family consisting of nonzero partial
isometries. We begin by making sense of paths of infinite
degree in a $k$-graph.

Let $k \in \{1,\ldots,k\}$ and $m \in (\N \cup \{ \infty
\})^k$. We define a $k$-graph $(\Omega_{k, m}, d)$ as follows:
for each $p \in \N^k$, the morphisms of degree $p$ are
\begin{align*}
    \Omega_{k, m}^p = \{ (q, q + p) : q \in \N^k, q + p \leq m \},
\end{align*}
and we define $r(p, q) := (p, p)$, $s(p, q) := (q, q)$, $d(p,
q) := q - p$, and $(p,q)(q,r) = (p,r)$. By convention, we
denote a vertex $(q, q)$ of $\Omega_{k, m}$ just by $q$.

For the next definition, recall that a $k$-graph morphism is a
degree-preserving functor between $k$-graphs. Given a $k$-graph
morphism $x : \Omega_{k,m} \to \Lambda$, we denote $x(0)$ by
$r(x)$, and $m$ by $d(x)$.

\begin{defn}[{\cite[Definition 5.10]{fmy}}]\label{bdp}
Fix a finitely aligned $k$-graph $\Lambda$. We denote by
$\partial \Lambda$ the set
\[
\bigcup_{m \in (\N \cup \{\infty\})^k} \big\{x : \Omega_{k,m} \to \Lambda \mid{}
    \parbox[t]{0.65\textwidth}{\noindent$x$ is a $k$-graph morphism, and for all
                                $n \in \mathbb{N}^k$ with $n \leq d(x)$ and all
                                $E \in x(n)\FE(\Lambda)$, there exists
                                $p \leq d(x) - n$ such that $x(n, n + p) \in E$
                                \big\}.}
\]
For $v \in \Lambda^0$, we write $v(\partial \Lambda)$ for $\{ x
\in \partial \Lambda : r(x) = v \}$.
\end{defn}

In \cite{fmy}, the elements of $\partial \Lambda$ were referred
to as \emph{boundary paths}. However, the same term has been
used elsewhere in the $k$-graph literature \cite{farthing,
rsy1, sims1} with different meanings. In order to avoid
confusion we refrain from using this term at all.

We will use the following lemma to construct a concrete
Cuntz-Krieger $\Lambda$-family below. This lemma is not new ---
indeed it is proved in greater generality in the thesis
\cite[Lemma~4.3.3]{ASPhD} --- but we nevertheless provide a
short proof here for ease of reference. To state the lemma,
recall that if $x : \Omega_{k,m} \rightarrow \Lambda$ is a
graph morphism, then: (1)~for each $n \in \N^k$ with $n \le m$,
there is a graph morphism $\sigma^n(x) : \Omega_{k, m-n} \to
\Lambda$ determined by $\sigma^n(x)(p,q) := x(n+p, n+q)$;
and~(2) for each $\lambda \in \Lambda r(x)$, there is a unique
graph morphism $\lambda x : \Omega_{k,m+d(\lambda)} \to
\Lambda$ such that $(\lambda x)(0, d(\lambda)) = \lambda$ and
$\sigma^{d(\lambda)}(\lambda x) = x$.

\begin{lemma}\label{addsub}
Let $(\Lambda, d)$ be a finitely aligned $k$-graph and let $x
\in \partial \Lambda$.
\begin{enumerate}
\item If $m \in \N^k$ and $m \leq d(x)$, then $\sigma^m (x)
    \in \partial \Lambda$.
\item If $\lambda \in \Lambda r(x)$, then $\lambda x \in
    \partial \Lambda$.
\end{enumerate}
\end{lemma}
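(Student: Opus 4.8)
The plan is to verify the defining condition of $\partial\Lambda$ in each case by pulling it back to the condition already known to hold for $x$. Part~(1) is routine. Writing $y := \sigma^m(x)$, we have $d(y) = d(x) - m$ and $y(p,q) = x(m+p, m+q)$, so $y(n) = x(m+n)$ for every $n \le d(y)$. Given $n \le d(y)$ and $E \in y(n)\FE(\Lambda)$, I observe that $E \in x(m+n)\FE(\Lambda)$ and $m+n \le d(x)$, so the defining condition for $x$ supplies $p \le d(x) - (m+n) = d(y) - n$ with $x(m+n, m+n+p) \in E$. Since $x(m+n, m+n+p) = y(n,n+p)$, this is exactly what is required.

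Part~(2) is the substance. Write $\ell := d(\lambda)$ and $z := \lambda x$, so $d(z) = d(x) + \ell$, $z(0,\ell) = \lambda$, and $\sigma^\ell(z) = x$, i.e.\ $z(\ell + a, \ell + b) = x(a,b)$. Fix $n \le d(z)$ and $E \in z(n)\FE(\Lambda)$; I must produce $p \le d(z) - n$ with $z(n, n+p) \in E$. The difficulty --- and the reason this direction is not immediate --- is that $n$ need not dominate $\ell$, so I cannot simply strip off $\lambda$ and invoke the condition for $x$. To circumvent this I set $j := n \vee \ell$ (noting $n \le j \le d(z)$) and let $\beta := z(n, j) \in z(n)\Lambda$, a finite path from $z(n)$ to the vertex $w := z(j) = x(j - \ell)$.

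The key step, which I expect to be the main obstacle, is to manufacture from $E$ a finite exhaustive set at $w$. Using finite alignment I define $G := \{\, \rho\big(d(\beta), d(\rho)\big) : \tau \in E,\ \rho \in \MCE(\beta, \tau) \,\} \subseteq w\Lambda$, the tails of the minimal common extensions of $\beta$ with elements of $E$; this is finite because $E$ is finite and each $\MCE(\beta,\tau)$ is finite. To see $G$ is exhaustive, take $\gamma \in w\Lambda$; exhaustiveness of $E$ applied to $\beta\gamma$ yields $\tau \in E$ and $\zeta \in \MCE(\beta\gamma, \tau)$, and restricting gives $\rho := \zeta\big(0, d(\beta) \vee d(\tau)\big) \in \MCE(\beta,\tau)$, so $\eta := \rho\big(d(\beta), d(\rho)\big) \in G$. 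A degree count then shows that $\sigma^{d(\beta)}(\zeta)$ restricts at the front to both $\gamma$ and $\eta$, whence $\MCE(\gamma, \eta) \ne \emptyset$. Thus $G \in w\FE(\Lambda)$.

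Finally I apply the defining condition for $x$ at the index $n' := j - \ell$, which satisfies $n' \le d(x)$, with the set $G \in x(n')\FE(\Lambda)$: it returns $q \le d(x) - n'$ and an element $\eta = x(n', n'+q) \in G$, say $\eta = \rho\big(d(\beta), d(\rho)\big)$ with $\rho \in \MCE(\beta, \tau)$ and $\tau \in E$. Translating through $\sigma^\ell(z) = x$ gives $z(j, j+q) = \eta$, hence $z(n, j+q) = \beta\eta = \rho$, and since $\rho\big(0, d(\tau)\big) = \tau$ I conclude $z\big(n, n + d(\tau)\big) = \tau \in E$. Taking $p := d(\tau)$ and noting $p \le d(\rho) = (j+q) - n \le d(z) - n$ completes the argument.
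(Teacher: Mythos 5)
Your proof is correct, and it follows essentially the same route as the paper's: your $j = n \vee \ell$ and $\beta = z(n,j)$ are exactly the paper's $n \vee d(\lambda)$ and $\lambda' = (\lambda x)\big(n, n \vee d(\lambda)\big)$, your set $G$ coincides with the paper's $\Ext(\lambda', E)$ (since $d(\rho) = d(\beta) \vee d(\tau)$ for every $\rho \in \MCE(\beta,\tau)$, your ``tails'' are precisely the paths $\rho\big(d(\beta), d(\beta) \vee d(\tau)\big)$ appearing in the definition of $\Ext$), and your concluding translation step, taking $p = d(\tau)$, is the paper's computation with $q = d(\mu)$. The one genuine difference is that the paper does not prove that this set is finite exhaustive; it invokes Lemma~\ref{c5} (quoted from \cite[Lemma~C.5]{rsy2}), whereas your middle paragraph establishes it from scratch: finiteness from finite alignment, and exhaustiveness by extending $\gamma$ to $\beta\gamma$, applying exhaustiveness of $E$, and restricting the resulting $\zeta \in \MCE(\beta\gamma,\tau)$ to $\rho = \zeta\big(0, d(\beta)\vee d(\tau)\big) \in \MCE(\beta,\tau)$. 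That argument is sound, so what your write-up buys is self-containedness --- in effect you have reproved the cited lemma inline --- at the cost of a longer proof; had you recognised $G$ as $\Ext(\beta, E)$ you could have quoted Lemma~\ref{c5} and shortened your second and third paragraphs to the paper's three lines.
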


To prove this lemma we must recall a definition and another
lemma from~\cite{rsy2}. We also use this definition and lemma
again in section~\ref{ilp}. Suppose $\lambda \in \Lambda$ and
$E \subseteq r(\lambda) \Lambda$, write $\Ext(\lambda, E)$ for
the set
\[
\bigcup_{\mu \in E} \{ \nu \big( d(\lambda), d(\lambda) \vee d(\mu) \big) :
        \nu \in \MCE(\lambda,\mu) \}. \label{ext}
\]

\begin{lemma}[{\cite[Lemma~C.5]{rsy2}}]\label{c5}
Let $(\Lambda, d)$ be a finitely aligned $k$-graph, let $v \in
\Lambda^0$, $\lambda \in v\Lambda$ and suppose $E \in
vFE(\Lambda)$. Then $\Ext(\lambda, E) \in s(\lambda)
FE(\Lambda)$.
\end{lemma}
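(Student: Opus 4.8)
The plan is to verify three things: that every element of $\Ext(\lambda, E)$ has range $s(\lambda)$, that $\Ext(\lambda, E)$ is finite, and that it is exhaustive in $s(\lambda)\Lambda$. The first two are quick. For the range condition, if $\nu \in \MCE(\lambda, \mu)$ then $\nu(0, d(\lambda)) = \lambda$, so by the factorisation property $\nu = \lambda \big( \nu(d(\lambda), d(\lambda) \vee d(\mu)) \big)$, whence the tail $\nu(d(\lambda), d(\lambda) \vee d(\mu))$ has range $s(\lambda)$. For finiteness, $E$ is finite by hypothesis and each $\MCE(\lambda, \mu)$ is finite because $\Lambda$ is finitely aligned, so $\Ext(\lambda, E)$ is a finite union of finite sets.

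The substance is exhaustiveness, which I would establish by fixing $\tau \in s(\lambda)\Lambda$ and producing $\xi \in \Ext(\lambda, E)$ with $\MCE(\tau, \xi) \neq \emptyset$. Since $r(\tau) = s(\lambda)$, the concatenation $\lambda\tau$ lies in $v\Lambda$; as $E \in v\FE(\Lambda)$ there exist $\mu \in E$ and a path $\zeta \in \MCE(\lambda\tau, \mu)$. By construction $\zeta(0, d(\lambda\tau)) = \lambda\tau$ and $\zeta(0, d(\mu)) = \mu$, and since $d(\lambda) \leq d(\lambda\tau) \leq d(\zeta)$ we also get $\zeta(0, d(\lambda)) = \lambda$. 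From $\zeta$ I would extract the relevant element of $\Ext(\lambda, E)$: set $\nu := \zeta(0, d(\lambda) \vee d(\mu))$, which restricts to $\lambda$ and to $\mu$ on the appropriate initial segments and has degree $d(\lambda) \vee d(\mu)$, so $\nu \in \MCE(\lambda, \mu)$ and $\xi := \nu(d(\lambda), d(\lambda) \vee d(\mu)) \in \Ext(\lambda, E)$.

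The crucial observation is that the single tail $\rho := \zeta(d(\lambda), d(\zeta))$ already witnesses $\MCE(\tau, \xi)$. One checks directly from the factorisation property that $\rho(0, d(\tau)) = \tau$ and $\rho(0, d(\xi)) = \xi$, so the only remaining point is the degree: writing $a = d(\lambda)$, $b = d(\mu)$, $c = d(\tau)$, one has $d(\rho) = \big( (a + c) \vee b \big) - a$ while $d(\tau) \vee d(\xi) = c \vee \big( (a \vee b) - a \big)$, and a coordinatewise computation shows these agree (in each coordinate $i$ both sides equal $\max(c_i, b_i - a_i)$, using $c_i \geq 0$). Hence $\rho \in \MCE(\tau, \xi)$ and exhaustiveness follows. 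I expect this degree bookkeeping to be the main obstacle: the coordinatewise identity $\big( (a + c) \vee b \big) - a = c \vee \big( (a \vee b) - a \big)$ is exactly what forces $\rho$ to have degree $d(\tau) \vee d(\xi)$ rather than merely dominating it, so this small semigroup calculation is the linchpin that makes $\rho$ a genuine minimal common extension.
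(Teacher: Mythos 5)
Your proof is correct, and there is nothing in the paper to compare it against: the paper states Lemma~\ref{c5} as a quoted result from \cite[Lemma~C.5]{rsy2} and gives no proof of its own. Your argument --- lift $\tau$ to $\lambda\tau$, apply exhaustiveness of $E$ to get $\zeta \in \MCE(\lambda\tau,\mu)$, truncate to $\nu = \zeta\big(0, d(\lambda)\vee d(\mu)\big) \in \MCE(\lambda,\mu)$, and verify via the coordinatewise identity $\big((a+c)\vee b\big) - a = c \vee \big((a\vee b)-a\big)$ that the tail $\zeta\big(d(\lambda), d(\zeta)\big)$ lies in $\MCE(\tau,\xi)$ --- is essentially the standard argument from the appendix of \cite{rsy2}, and you have correctly identified the degree bookkeeping as the only nontrivial step.
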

\begin{proof}[Proof of Lemma~\ref{addsub}]
(1) Fix $n \in \N^k$ such that $n \leq d\big( \sigma^m (x)
\big)$ and fix $E \in \sigma^m(x)(n) \FE(\Lambda)$. Then $m + n
\leq d(x)$ and since $x \in \partial \Lambda$ there exists $p
\leq d(x) - (m + n)$ such that $x(m + n, m + n + p) \in E$.
That is, $p \leq d \big( \sigma (x) \big) - n$ and $\big(
\sigma ^m (x) \big) (n, n + p) \in E$.

(2) Fix $E \in (\lambda x)(n)\FE(\Lambda)$. Let $\lambda' =
(\lambda x) \big(n, n \vee d(\lambda) \big)$. By
Lemma~\ref{c5}, $\Ext(\lambda', E) \in x \big( (n \vee
d(\lambda)) - n \big) \FE(\Lambda)$, so there exists $p \leq
d(x) - \big( (n \vee d(\lambda)) - d(\lambda) \big)$ such that
$\alpha = (\lambda x) \big( n \vee d(\lambda), (n \vee
d(\lambda)) + p \big) = x \big((n \vee d(\lambda)) -
d(\lambda), (n \vee d(\lambda)) - d(\lambda) + p \big)$ belongs
to $\Ext(\lambda', E)$, say $\lambda' \alpha = \mu \beta$ where
$\mu \in E$. But now $q = d(\mu)$ satisfies
\begin{flalign*}
&&&(\lambda x) (n, n + q) = (\lambda' \alpha) (0, q) = (\mu
\beta) (0, q) = \mu \in E.&\qedhere
\end{flalign*}
\end{proof}

\begin{defn}[The $\partial \Lambda$ Representation]\label{bdpr}
Let $(\Lambda, d)$ be a finitely aligned $k$-graph and let $\{
\zeta_x : x \in \partial \Lambda \}$ denote the standard
orthonormal basis for $\ell^2 (\partial \Lambda)$. For $\lambda
\in \Lambda$, define
\begin{align*}
S_\lambda \zeta_x =
    \begin{cases}
        \zeta_{\lambda x} & \mathrm{if} ~ s(\lambda) = r(x), \\
        0 & \mathrm{otherwise.}
    \end{cases}
\end{align*}
Then $\{ S_\lambda : \lambda \in \Lambda \}$ is a Cuntz-Krieger
$\Lambda$-family in $\mathcal{B} \big( \ell^2(\partial \Lambda)
\big)$ called the \emph{$\partial \Lambda$ representation} (see
\cite[Lemma~4.6]{sims1}), and for each $\lambda \in \Lambda$,
\[
S^*_\lambda \zeta_x =
    \begin{cases}
        \zeta_{\sigma^{d(\lambda)}(x)} & \text{if $x(0,d(\lambda)) = \lambda$} \\
        0 & \text{otherwise.} 
    \end{cases}
\]
\end{defn}

\begin{remark}\label{0rep}
Fix a finitely aligned $k$-graph $\Lambda$. Lemma~5.15
of~\cite{fmy} implies that each $S_v \in
\ell^2(\partial\Lambda)$ is nonzero. The universal property of
$C^* (\Lambda)$ then implies that the universal generating
partial isometries in $C^*(\Lambda)$ are all nonzero.
\end{remark}

\section{Aperiodicity, cofinality, and the main theorem}\label{ilp}
In this section we introduce our new formulations of
aperiodicity and cofinality, and state our main result,
Theorem~\ref{main}.

\begin{defn}\label{AP}
We say that a $k$-graph $\Lambda$ is \emph{aperiodic} if for
every pair of distinct paths $\alpha, \beta \in \Lambda$ with
$s(\alpha) = s(\beta)$ there exists $\tau \in s(\alpha)
\Lambda$ such that $\MCE(\alpha \tau, \beta \tau) = \emptyset$.
\end{defn}

\begin{remark}\label{0min}
To check that $\Lambda$ is aperiodic, it suffices to show that
for every distinct pair $\mu, \nu$ such that $s(\mu) = s(\nu)$,
$r(\mu) = r(\nu)$ and $d(\mu) \wedge d(\nu) = 0$, there exists
$\tau \in s(\mu)\Lambda$ such that $\MCE(\mu \tau, \nu \tau) =
\emptyset$. For suppose that this is indeed the case, and fix
distinct $\alpha,\beta \in \Lambda$ with $s(\alpha) =
s(\beta)$. Let $m := d(\alpha) \wedge d(\beta)$, let $\mu :=
\alpha \big( m, d(\alpha) \big)$, and let $\nu := \beta \big(
m, d(\beta) \big)$. If $\alpha(0, m) \not= \beta(0, m)$, then
$\tau = s(\alpha)$ satisfies $\MCE(\alpha \tau, \beta \tau) =
\emptyset$. On the other hand, if $\alpha(0, m) = \beta(0, m)$,
then that $\alpha \not= \beta$ forces $\mu \not= \nu$. Hence by
assumption there exists $\tau \in s(\mu)\Lambda$ such that
$\MCE(\mu \tau, \nu \tau) = \emptyset$. Thus $\MCE(\alpha \tau,
\beta \tau) = \{ \alpha(0,m) \rho : \rho \in \MCE(\mu \tau, \nu
\tau) \} = \emptyset$ as required.
\end{remark}

\begin{defn}
Suppose $\Lambda$ is a $k$-graph. We say that $\Lambda$ is
\emph{cofinal} if for every $v, w \in \Lambda^0$ there exists
$E \in w\FE(\Lambda)$ such that $v \Lambda s(\alpha) \neq
\emptyset$ for every $\alpha \in E$.
\end{defn}

\begin{theorem}\label{main}
Suppose $\Lambda$ is a finitely aligned $k$-graph. Then
$C^*(\Lambda)$ is simple if and only if $\Lambda$ is aperiodic
and cofinal.
\end{theorem}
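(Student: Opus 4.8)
The plan is to deduce the theorem from two separate ideal-theoretic facts, one capturing aperiodicity and one capturing cofinality, so that simplicity becomes their logical conjunction. Concretely, I would establish (A) that $\Lambda$ is aperiodic if and only if every nonzero ideal of $C^*(\Lambda)$ contains a vertex projection $s_v$, and (B) that $\Lambda$ is cofinal if and only if every ideal of $C^*(\Lambda)$ that contains a vertex projection is all of $C^*(\Lambda)$. Granting (A) and (B), the theorem is immediate. If $\Lambda$ is aperiodic and cofinal and $I$ is a nonzero ideal, then (A) puts some $s_v \in I$ and (B) forces $I = C^*(\Lambda)$, so $C^*(\Lambda)$ is simple. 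Conversely, if $C^*(\Lambda)$ is simple, then failure of aperiodicity would, by (A), produce a nonzero ideal with no vertex projection (hence proper, since $C^*(\Lambda)$ contains every $s_v$), and failure of cofinality would, by (B), produce a proper ideal containing a vertex projection (nonzero by Remark~\ref{0rep}); either outcome contradicts simplicity.

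The cofinal-to-unital half of (B) is the concrete and routine direction, and I would prove it directly from (CK4). Suppose $I$ is an ideal containing $s_v$ and fix $w \in \Lambda^0$; choose $E \in w\FE(\Lambda)$ with $v\Lambda s(\alpha) \neq \emptyset$ for every $\alpha \in E$. For each $\alpha \in E$ pick $\lambda_\alpha \in v\Lambda s(\alpha)$. Since $r(\lambda_\alpha) = v$, (CK2) gives $s_{\lambda_\alpha} = s_v s_{\lambda_\alpha} \in I$, whence $s_{s(\alpha)} = s_{\lambda_\alpha}^* s_{\lambda_\alpha} \in I$ and $s_\alpha s_\alpha^* = (s_\alpha s_{s(\alpha)}) s_\alpha^* \in I$. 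The projections $\{ s_\alpha s_\alpha^* : \alpha \in E \}$ commute and are dominated by $s_w$ (Lemma~\ref{2.7}(i),(iii)), so expanding the vanishing product $\prod_{\alpha \in E}(s_w - s_\alpha s_\alpha^*) = 0$ by inclusion--exclusion yields $s_w = \sum_{\emptyset \neq F \subseteq E} (-1)^{|F|+1} \prod_{\alpha \in F} s_\alpha s_\alpha^* \in I$. Thus $I$ contains every vertex projection, and since $s_\lambda = s_\lambda s_{s(\lambda)} \in I$ for all $\lambda$, Lemma~\ref{2.7}(iv) gives $I = C^*(\Lambda)$.

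The remaining content is the aperiodic-to-vertex-projection half of (A) --- essentially a Cuntz--Krieger uniqueness theorem phrased for ideals --- and this is where I expect the real work to be. My plan is the standard conditional-expectation argument adapted to the finite-path formulation of aperiodicity. Let $\Phi$ denote the canonical faithful conditional expectation onto the diagonal $\clsp\{ s_\mu s_\mu^* : \mu \in \Lambda \}$ (available via the spatial structure of the $\partial\Lambda$ representation of Definition~\ref{bdpr}). Given a nonzero ideal $I$, take $0 \le a \in I$ with $\|a\| = 1$; faithfulness of $\Phi$ gives $\Phi(a) \neq 0$, and after approximating $a$ by a finite sum $\sum_{\mu,\nu} c_{\mu,\nu} s_\mu s_\nu^*$ (with $s(\mu) = s(\nu)$, by Lemma~\ref{2.7}(iv)) one isolates a surviving diagonal term $s_v s_v^*$. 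The key point is that Definition~\ref{AP} supplies, for each off-diagonal pair $\mu \neq \nu$, a path $\tau$ with $\MCE(\mu\tau, \nu\tau) = \emptyset$, so that (CK3) gives $s_{\mu\tau}^* s_{\nu\tau} = 0$; iterating over the finitely many off-diagonal terms should produce a single projection $p = s_\tau s_\tau^*$, or a compression $s_\tau^* a s_\tau$, annihilating every off-diagonal contribution while preserving the chosen diagonal term. This leaves an element of $I$ within $\varepsilon$ of a nonzero scalar multiple of a vertex projection, so a functional-calculus argument places $s_v \in I$. Arranging the cutting-down so that one $\tau$ works simultaneously for all the off-diagonal terms, and controlling the resulting error estimates, is the main obstacle.

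Finally, for the converse directions of (A) and (B) I would argue contrapositively. If $\Lambda$ fails to be aperiodic there are distinct $\alpha, \beta$ with $s(\alpha) = s(\beta)$ and $\MCE(\alpha\tau, \beta\tau) \neq \emptyset$ for every $\tau$; this local periodicity should let one build a nonzero ideal containing no vertex projection, for instance by exhibiting a nonfaithful quotient in which every $s_v$ survives, along the lines of Definition~\ref{bdpr} and Remark~\ref{0rep}. If $\Lambda$ fails to be cofinal, the witnessing pair $v, w$ should yield a proper, nonempty saturated hereditary set of vertices whose associated ideal is proper yet contains the corresponding vertex projections. Both constructions are more delicate than the sufficiency arguments but are standard in spirit; the genuinely hard step of the whole proof remains the finite-path Cuntz--Krieger uniqueness argument of the previous paragraph.
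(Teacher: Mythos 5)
Your high-level decomposition is exactly the paper's: your fact (A) is the equivalence $(1)\Leftrightarrow(2)$ of Theorem~\ref{4.3rob-s2}, your fact (B) is the equivalence $(1)\Leftrightarrow(3)$ of Theorem~\ref{ideal}, and the way you combine them to get simplicity is precisely the paper's proof of Theorem~\ref{main}. Within that frame, your inclusion--exclusion argument from (CK4) for ``cofinal implies any ideal containing a vertex projection is all of $C^*(\Lambda)$'' is correct, and is in fact more direct than the paper's route, which obtains this implication by passing through the infinite-path reformulation (statement (2) of Theorem~\ref{ideal}) and the path-building Lemma~\ref{E}. Your sketch of the converse of (A) (a nonfaithful quotient in which every $s_v$ survives) is also the paper's strategy: it exhibits $s_{\mu\alpha}s_{\mu\alpha}^* - s_{\nu\alpha}s_{\mu\alpha}^* \in \ker(\pi_S)\setminus\{0\}$ using local periodicity (Proposition~\ref{teqnlp} and Lemma~\ref{3.3}) and the gauge action.

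The genuine gaps are in the two hard steps. First, your uniqueness argument is circular as stated: you take the faithfulness of the diagonal expectation from ``the spatial structure of the $\partial\Lambda$ representation,'' but an expectation defined through $\pi_S$ is faithful on $C^*(\Lambda)$ only if $\pi_S$ itself is faithful, and faithfulness of $\pi_S$ is condition (3) of Theorem~\ref{4.3rob-s2} --- in the paper it is a \emph{consequence} of the uniqueness theorem, not an input to it. (A faithful diagonal expectation does exist abstractly, but only via the gauge expectation onto the AF core together with the core's matrix-unit structure from \cite{rsy2}, i.e.\ essentially the machinery you hoped to avoid.) Moreover the step you yourself flag as ``the main obstacle'' --- one $\tau$ simultaneously killing all off-diagonal terms with controlled errors --- is exactly the content of Lemma~\ref{pre_ck} and Lemma~\ref{C}, and the paper resolves it differently than your sketch suggests: the compression $Q$ kills only the terms with $d(\mu) \neq d(\nu)$ and is shown to be \emph{isometric} on the same-degree part by a matrix-unit argument, so one never needs to isolate a vertex projection by functional calculus at all. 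Second, your route to ``not cofinal implies a proper ideal containing a vertex projection'' via a saturated hereditary vertex set is unsupported: in the finitely aligned setting saturation must be phrased in terms of finite exhaustive sets, the correspondence between such vertex sets and ideals is the gauge-invariant-ideal classification of \cite{sims2} (which the paper deliberately avoids), and you give no argument that the resulting ideal is proper. The paper instead uses Lemma~\ref{E} to manufacture $x \in \partial\Lambda$ with $v \Lambda x(n) = \emptyset$ for all $n \leq d(x)$, and then the explicit representation on $\clsp\{\zeta_y : \sigma^m(y) = \sigma^n(x) \text{ for some } m,n\}$, whose kernel contains $s_v$ but not $s_{r(x)}$. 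Without some substitute for these constructions, the necessity half of your proof of Theorem~\ref{main} is incomplete.
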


The proof of Theorem~\ref{main} occupies the rest of the paper.
We begin by establishing the equivalence of aperiodicity with
earlier conditions appearing in the literature. To this end we
recall the notion of local periodicity for finitely aligned
$k$-graphs. This was introduced by Shotwell in
(\cite[Definition~3.1 and Remarks~3.4]{shotwell}).

\begin{defn}\label{lp}
Let $(\Lambda, d)$ be a finitely aligned $k$-graph and suppose
$m, n \in \N^k$ are distinct. We say $\Lambda$ has \emph{local
periodicity $m,n$ at $v$} if for every $x \in v (\partial
\Lambda)$ we have:
\begin{enumerate}
\item[(LP1)] $m \vee n \leq d(x)$; and
\item[(LP2)] $\sigma^{m} (x) = \sigma^{n} (x)$.
\end{enumerate}
We say $\Lambda$ has \emph{no local periodicity} if $\Lambda$
does not have local periodicity $m,n$ at any $v \in \Lambda^0$
for any distinct $m, n \in \mathbb{N}^k$. That is, $\Lambda$
has no local periodicity if for every $v \in \Lambda^0$, and
every distinct $m, n \in \N^k$ either:
\begin{enumerate}
\item[(NLP1)] there exists $x \in v (\partial \Lambda)$
    such that $d(x) \ngeq m \vee n$; or
\item[(NLP2)] $d(x) \geq m \vee n$ for every $x \in v
    (\partial \Lambda)$, and there exists $y \in v(\partial
    \Lambda)$ such that $\sigma^m (y) \neq \sigma^n (y)$.
\end{enumerate}
\end{defn}

Recall from \cite{fmy} that $\Lambda$ is said to satisfy
condition~(A) if for each $v \in \Lambda^0$ there exists $x \in
v (\partial \Lambda)$ such that $\sigma^m (x) = \sigma^n (x)$
implies $m = n$ for all $m, n \leq d(x)$. Also recall from
\cite{rsy2} that $\Lambda$ is said to satisfy condition~(B) if
for each $v \in \Lambda^0$ there exists $x \in v \Lambda^{\leq
\infty}$ such that $\lambda, \mu \in \Lambda$ and $\lambda \neq
\mu$ imply $\lambda x \neq \mu x$.

\begin{prop}\label{teqnlp}
Suppose $\Lambda$ is a finitely aligned $k$-graph. Then the
following are equivalent:
\begin{enumerate}
\item $\Lambda$ is aperiodic,
\item $\Lambda$ has no local periodicity,
\item $\Lambda$ satisfies condition~(A) of
    \cite[Theorem~7.1]{fmy}
\item $\Lambda$ satisfies condition~(B) of
    \cite[Theorem~4.5]{rsy2}
\end{enumerate}
\end{prop}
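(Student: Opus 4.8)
The plan is to prove Proposition~\ref{teqnlp} by establishing a cycle of implications, with the definition of aperiodicity (Definition~\ref{AP}) as the central hub since it is the new finite-path condition. I would prove $(1)\Leftrightarrow(2)$ directly, and then connect $(2)$, $(3)$ and $(4)$ through their common reliance on $\partial\Lambda$; the latter equivalences are essentially reformulations already implicit in the literature, so the genuinely new content is the passage between the finite-path condition $(1)$ and the infinite-path condition $(2)$.

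For the implication $(2)\Rightarrow(1)$, I would argue by contrapositive. Suppose $\Lambda$ is not aperiodic; then by Remark~\ref{0min} there exist distinct $\mu,\nu$ with $s(\mu)=s(\nu)$, $r(\mu)=r(\nu)=:v$ and $d(\mu)\wedge d(\nu)=0$ such that $\MCE(\mu\tau,\nu\tau)\neq\emptyset$ for every $\tau\in s(\mu)\Lambda$. The key observation is that this condition lets me build, for any $x\in s(\mu)(\partial\Lambda)$, a well-defined path in $v(\partial\Lambda)$ by extending $\mu$ (equivalently $\nu$) along $x$: the nonempty minimal common extensions $\MCE(\mu\tau,\nu\tau)$ provide a consistent way to glue, and one checks using Lemma~\ref{addsub} and the finitely-aligned hypothesis that the resulting morphism $y$ lies in $\partial\Lambda$. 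Setting $m:=d(\mu)$ and $n:=d(\nu)$, this construction should force $m\vee n\leq d(y)$ and $\sigma^m(y)=\sigma^n(y)$ for every such $y$ (and in fact for every element of $v(\partial\Lambda)$, since by the factorisation property every path in $v(\partial\Lambda)$ agreeing with $\mu$ and $\nu$ on their common initial segment arises this way). That gives local periodicity $m,n$ at $v$, so $\Lambda$ does have local periodicity, completing the contrapositive.

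For $(1)\Rightarrow(2)$, again by contrapositive: suppose $\Lambda$ has local periodicity $m,n$ at some $v$ with $m\neq n$. Replacing $m,n$ by $m-(m\wedge n)$ and $n-(m\wedge n)$ and using the factorisation property I may assume $m\wedge n=0$. Pick any $x\in v(\partial\Lambda)$; by (LP1) both $\mu:=x(0,m)$ and $\nu:=x(0,n)$ are defined, they are distinct (as $m\neq n$) with common range $v$, and (LP2) gives $\sigma^m(x)=\sigma^n(x)$. The point is that local periodicity at $v$ holds for \emph{every} element of $v(\partial\Lambda)$, and I would use this to show that for every $\tau\in s(\mu)\Lambda$ one has $\MCE(\mu\tau,\nu\tau)\neq\emptyset$: given $\tau$, extend $\tau$ to some $z\in s(\tau)(\partial\Lambda)$ (possible since $\partial\Lambda$-paths exist from every vertex, cf.\ Remark~\ref{0rep}), form $w:=\mu\tau z\in v(\partial\Lambda)$ via Lemma~\ref{addsub}, and apply (LP1)/(LP2) to $w$ to produce a common extension of $\mu\tau$ and $\nu\tau$. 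Hence $s(\mu)=s(\nu)$ and no $\tau$ witnesses aperiodicity for the pair $\mu,\nu$, so $\Lambda$ is not aperiodic.

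The remaining equivalences $(2)\Leftrightarrow(3)$ and $(3)\Leftrightarrow(4)$ I expect to be more routine translations. Condition~(A) asks for the existence, at each $v$, of a single $x\in v(\partial\Lambda)$ that is \emph{aperiodic} in the sense that $\sigma^m(x)=\sigma^n(x)\Rightarrow m=n$; negating no local periodicity and negating condition~(A) should be matched up by a Baire-category or direct diagonal argument showing that if no single aperiodic $x$ exists at $v$, then some fixed pair $m,n$ works for all $x$, which is exactly local periodicity $m,n$ at $v$. For $(3)\Leftrightarrow(4)$ I would identify $\Lambda^{\leq\infty}$ with the relevant subset of $\partial\Lambda$ and show that the injectivity-of-shift formulation of condition~(A) is equivalent to the injectivity-of-prefixing formulation of condition~(B), both expressing the same rigidity of a path. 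The main obstacle is the heart of the argument, namely the gluing construction in $(2)\Rightarrow(1)$ (and its mirror in $(1)\Rightarrow(2)$): verifying that the path one builds genuinely lands in $\partial\Lambda$ and that the local periodicity holds \emph{uniformly} over all boundary paths at $v$ rather than just the constructed one. Handling this uniformity correctly — ensuring $\MCE(\mu\tau,\nu\tau)\neq\emptyset$ for \emph{every} $\tau$, not merely the ones arising from a chosen extension — is where the finitely-aligned hypothesis and the exhaustive-set condition defining $\partial\Lambda$ must be used carefully.
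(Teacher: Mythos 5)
Your proposal fails at the heart of the proposition, namely your contrapositive of $(2)\Rightarrow(1)$. From failure of aperiodicity you extract (via Remark~\ref{0min}) a pair $\mu\neq\nu$ with $r(\mu)=r(\nu)=v$, $s(\mu)=s(\nu)$, $d(\mu)\wedge d(\nu)=0$ and $\MCE(\mu\tau,\nu\tau)\neq\emptyset$ for all $\tau\in s(\mu)\Lambda$, and you claim this forces local periodicity $d(\mu),d(\nu)$ \emph{at the common range $v=r(\mu)$}. Your justification --- that ``every path in $v(\partial\Lambda)$ agreeing with $\mu$ and $\nu$ on their common initial segment arises this way'' --- is empty: since $d(\mu)\wedge d(\nu)=0$, that common initial segment is just the vertex $v$, so you are asserting that every boundary path at $v$ extends $\mu$ or $\nu$, which is false whenever $v$ receives paths from elsewhere; the hypothesis quantifies only over $\tau\in s(\mu)\Lambda$ and gives no control over boundary paths at $v$ avoiding $\mu$ and $\nu$. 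Concretely: take a $2$-graph with vertices $v,u,w$, where $u$ carries commuting blue and red loops $\beta,\rho$, where $w$ carries an aperiodic single-vertex $2$-graph with two blue loops $\beta_1,\beta_2$ and two red loops $\rho_1,\rho_2$ (such exist), and where $v$ receives a blue edge $b$ and a red edge $c$ from $u$ and a blue edge $f$ and a red edge $g$ from $w$, with factorisation squares $b\rho=c\beta$ and $f\rho_i=g\beta_i$. Then for every $\tau=\beta^m\rho^n\in u\Lambda$ the path $b\beta^m\rho^{n+1}=c\beta^{m+1}\rho^n$ lies in $\MCE(b\tau,c\tau)$, so the pair $(b,c)$ witnesses failure of aperiodicity; but for suitable $y\in w(\partial\Lambda)$ the path $fy\in v(\partial\Lambda)$ satisfies $\sigma^{(1,0)}(fy)\neq\sigma^{(0,1)}(fy)$, so $\Lambda$ does \emph{not} have local periodicity $(1,0),(0,1)$ at $v$. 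What is true --- and what the paper's proof of $(2)\Rightarrow(1)$ establishes when read contrapositively --- is that local periodicity appears at the common \emph{source} $s(\mu)=s(\nu)$, with the roles of the two degrees swapped; and proving that is exactly where the work lies: the paper must handle the case (NLP1) in which some boundary path at the source has degree $\ngeq d(\mu)\vee d(\nu)$, which splits into two subcases and uses finite alignment together with an infinite-receiver argument to manufacture the required $\tau$. Your sketch has no counterpart to any of this, even though you correctly flag ``uniformity over all boundary paths'' as the danger point.

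Your other direction, the contrapositive of $(1)\Rightarrow(2)$, also has a gap, but a repairable one. Given local periodicity $m,n$ at $v$, you set $\mu=x(0,m)$, $\nu=x(0,n)$, and for $\tau\in s(\mu)\Lambda$ you form $w=\mu\tau z$ and apply (LP2) to $w$. This exhibits $w\big(0,(m\vee n)+d(\tau)\big)$ as a common extension of $\mu\tau$ and $w(0,n)\tau$ --- but $w(0,n)$ need not equal $\nu$, because $w$ and $x$ agree only up to degree $m$ and $n\nleq m$. So for each $\tau$ you obtain a common extension of $\mu\tau$ with $\rho_\tau\tau$ for some degree-$n$ path $\rho_\tau$ depending on $\tau$, whereas non-aperiodicity requires a single fixed pair defeating every $\tau$. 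The repair is precisely the paper's Lemma~\ref{3.3}: with $\alpha=x(m,m\vee n)$ one has $\mu\alpha z'=\nu\alpha z'$ for \emph{all} $z'\in s(\alpha)\partial\Lambda$, so taking $z'=\tau z$ shows that the fixed pair $(\mu\alpha,\nu\alpha)$ satisfies $\MCE(\mu\alpha\tau,\nu\alpha\tau)\neq\emptyset$ for every $\tau$. Two further points: your preliminary reduction ``replace $m,n$ by $m-(m\wedge n)$, $n-(m\wedge n)$'' is unjustified and in fact false as a statement about the same vertex (in the $1$-graph with an edge $a$ from $u$ to $v$ and a loop $e$ at $u$, local periodicity $2,1$ holds at $v$ but $1,0$ does not); and whereas you propose to prove $(2)\Leftrightarrow(3)\Leftrightarrow(4)$ by a Baire-category or diagonal argument, the paper does not prove these at all but cites Shotwell's Proposition~3.10 --- extracting a single pair $m\neq n$ that works for \emph{all} of $v(\partial\Lambda)$ from the negation of condition~(A) is itself nontrivial, and your sketch of it is not an argument.
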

\begin{proof}
Shotwell uses condition~(A) as his definition of aperiodicity
and establishes $(2) \Leftrightarrow (3) \Leftrightarrow (4)$
in \cite[Proposition~3.10]{shotwell}, so it suffices to show
$(1) \Leftrightarrow (2)$.

$(1) \Rightarrow (2)$. Suppose $\Lambda$ is aperiodic. Fix $v
\in \Lambda^0$ and distinct $m, n \in \N^k$. If there exists $x
\in v(\partial \Lambda)$ such that $d(x) \ngeq m \vee n$ then
(NLP1) holds and we are done. So we may suppose that $d(x) \geq
m \vee n$ for every $x \in v(\partial \Lambda)$. Since $v
(\partial \Lambda)$ is nonempty we have $v \Lambda^{m \vee n}
\neq \emptyset$. Fix $\lambda \in v \Lambda^{m \vee n}$ and
write $\lambda = \mu \alpha = \nu \beta$ where $d(\mu) = m$ and
$d(\nu) = n$. Note that this implies $d(\mu) + d(\alpha) =
d(\nu) + d(\beta)$ so $d(\alpha) - d(\beta) = d(\nu) - d(\mu)$
and in particular $d(\alpha) \neq d(\beta)$. Since $\Lambda$ is
aperiodic, there exists $\tau$ such that $\MCE(\alpha \tau,
\beta \tau) = \emptyset$. Fix $z \in s(\tau) (\partial
\Lambda)$ and let $y = \lambda \tau z \in v(\partial \Lambda)$.
Then $\sigma^m (y) = \alpha \tau z$ and $\sigma^n (y) = \beta
\tau z$. If these were equal then $\big( \sigma^m (y) \big)
\big( 0, d(\alpha \tau) \vee d(\beta \tau) \big)$ would belong
to $\MCE(\alpha \tau, \beta \tau)$, contradicting our choice of
$\tau$. So we must have $\sigma^n (y) \neq \sigma^m (y)$.

$(2) \Rightarrow (1)$. Suppose $\Lambda$ has no local
periodicity and fix distinct $\alpha, \beta \in \Lambda$ with
$s(\alpha) = s(\beta) = v$ and $d(\alpha) \neq d(\beta)$. By
Remark~\ref{0min} we may suppose that $r(\alpha) = r(\beta)$
and $d(\alpha) \wedge d(\beta) = 0$. Let $D := d(\alpha) \vee
d(\beta) = d(\alpha) + d(\beta)$. Then $D - d(\alpha) =
d(\beta)$ and $D - d(\beta) = d(\alpha)$. We consider two
cases: for $m = d(\beta)$ and $n = d(\alpha)$ either
\begin{enumerate}
\item (NLP1) holds.; or
\item (NLP2) holds
\end{enumerate}
We consider case (2) first because it is simpler.

\textbf{Case (2).} Suppose (NLP2) holds. That is, for every $x
\in v (\partial \Lambda)$ we have $d(x) \geq D$ and there
exists $y \in v (\partial \Lambda)$ such that
$\sigma^{d(\beta)} (y) \neq \sigma^{d(\alpha)} (y)$. Then there
exists $M_{\alpha, \beta} \leq d(y) - (d(\beta) \vee
d(\alpha))$ such that
\begin{align}
    \big( \sigma^{d(\beta)} (y) \big) (0, M_{\alpha, \beta}) &\neq \big( \sigma^{d(\alpha)} (y) \big) (0, M_{\alpha, \beta}) \nonumber
\intertext{or equivalently,}
    y(d(\beta), d(\beta) + M_{\alpha, \beta}) &\neq y(d(\alpha), d(\alpha) + M_{\alpha, \beta}). \label{A}
\end{align}

Let $\tau = y \big( 0, D + M_{\alpha, \beta})$. Then $d(\beta)
+ M_{\alpha, \beta} \leq d(\tau)$ and $d(\alpha) + M_{\alpha,
\beta} \leq d(\tau)$. Moreover,
\[
(\alpha \tau) (D, D + M_{\alpha, \beta})
    = \tau \big( D - d(\alpha), D - d(\alpha) + M_{\alpha, \beta} \big)
    = y \big( d(\beta), d(\beta) + M_{\alpha, \beta} \big)
\]
and similarly,
\[
(\beta \tau) (D, D + M_{\alpha, \beta}) = y\big( d(\alpha), d(\alpha) + M_{\alpha, \beta} \big).
\]
It follows from~\eqref{A} that $\MCE(\alpha \tau, \beta \tau) =
\emptyset$.

\textbf{Case (1).} Now suppose (NLP1) holds. That is, there
exists $x \in v (\partial \Lambda)$ such that $d(x) \ngeq D$.
We consider two subcases:
\begin{itemize}
\item[(i)] for every $\lambda \in v \Lambda$ there exists
    $\lambda' \in s(\lambda)\Lambda$ such that $d(\lambda
    \lambda') \geq D$; or
\item[(ii)] there exists $\lambda \in \Lambda$ such that
    $s(\lambda)\Lambda^{D - d(\lambda)} = \emptyset$.
\end{itemize}

\textbf{Case (1)(i).} We claim there exist $\tau_f \in v
\Lambda$ and $i \in \{ 1,\ldots,k \}$ such that $d(\alpha)_i
\neq d(\beta)_i$ and $|s(\tau_f) \Lambda^{e_i}| = \infty$. Let
$p = D \wedge d(x)$, $\lambda = x(0, p)$ and $G = s(\lambda)
\Lambda^{D - p}$. Then (i) implies that $G$ is exhaustive.
Moreover, since $d(x) \ngeq D$, there is no $q \leq d(x) - n$
such that $x(p, p + q) \in G$. Since $x \in \partial \Lambda$
it follows that $G \notin \FE(\Lambda)$ and hence that $|G| =
\infty$.

Since $D - p < \infty$ there exists $i \in \{1,\ldots,k\}$ and
a smallest $a < (D - p)_i \in \N$ such that $|\{ \mu \big( a
e_i, (a + 1) e_i \big) : \mu \in G \}| = \infty$. Since $a$ is
the smallest element of $\N$ with the given property we have
$|\{ \mu(0, a e_i) : \mu \in G \}| < \infty$ and there exists
$\lambda' \in G$ such that $|\lambda'(a e_i) \Lambda^{e_i}| =
\infty$. Let $\tau_f = \lambda \lambda'(0, a e_i)$.

We now show that $d(\alpha)_i \neq d(\beta)_i$. Since $\lambda'
\in G$ we have the inequality $0 < (a + 1) \leq d(\lambda')_i =
( D - p)_i$, which implies $D_i > 0$. Since $D = d(\alpha) \vee
d(\beta)$ and since $d(\alpha) \wedge d(\beta) = 0$ by
assumption, it follows that $d(\alpha)_i \neq d(\beta)_i$. This
establishes the claim.

Since $\Lambda$ is finitely aligned, $|\MCE(\alpha \tau_f,
\beta \tau_f)| < \infty$ and since $d(\alpha \tau_f)_i \neq
d(\beta \tau_f)_i$ we deduce that $d(\psi)_i > \max\{ d(\alpha
\tau_f)_i, d(\beta \tau_f)_i \}$ for every $\psi \in
\MCE(\alpha \tau_f, \beta \tau_f)$. However $|s(\tau_f)
\Lambda^{e_i}| = \infty$, so there exists $\tau_i \in s(\tau_f)
\Lambda^{e_i}$ such that $\psi \big( d(\alpha \tau_f), d(\alpha
\tau_f\tau_i) \big) \neq \tau_i$ for all $\psi \in \MCE(\alpha
\tau_f, \beta \tau_f)$.

Let $\tau = \tau_f \tau_i$. We now must argue that $\MCE(\alpha
\tau, \beta \tau) = \emptyset$. Suppose for contradiction that
$\psi \in \MCE(\alpha \tau, \beta \tau)$. Then in particular
$\psi \big( 0, d(\alpha \tau) \vee d(\beta \tau) \big) \in
\MCE(\alpha \tau_f, \beta \tau_f)$. However $\psi \big(
d(\alpha \tau_f), d(\alpha \tau_f \tau_i) \big) = \tau_i$ which
contradicts our choice of $\tau_i$.

\textbf{Case (1)(ii).} Let $\lambda$ be as in Case (1)(ii). Fix
$\lambda' \in s(\lambda) \Lambda^{\leq D - p} \subseteq
s(\lambda) \Lambda$. Then $d(\lambda \lambda') \ngeq D$ and
there exists $i \in \{ 1,\ldots,k \}$ such that $d(\lambda
\lambda')_i < D_i$.  For each $i$ with $d(\lambda \lambda')_i <
D_i$, we have $d(\lambda')_i < (D - p)_i$ so by definition of
$\Lambda^{\leq D - p}$ we have $s(\lambda') \Lambda^{e_i} =
\emptyset$. We now claim that there exists $i$ such that
$d(\lambda \lambda')_i < D_i$ and $d(\alpha)_i \neq
d(\beta)_i$. Suppose for contradiction that $d(\alpha)_i =
d(\beta)_i$ for each $i \in \{1,\ldots,k\}$ such that
$d(\lambda \lambda') < D_i$. By assumption we have $d(\alpha)
\wedge d(\beta) = 0$ which implies that whenever $d(\alpha)_i =
d(\beta)_i$ we must have $d(\alpha)_i = d(\beta)_i = 0$. Fix $i
\in \{ 1,\ldots,k \}$ such that $d(\lambda \lambda')_i < D_i$.
Then $d(\alpha)_i = d(\beta)_i = 0$ and we have the
contradiction
\[
    0 < d(\lambda \lambda')_i < D_i = \big( d(\alpha) \vee d(\beta) \big)_i = 0.
\]
We now let $\tau = \lambda \lambda'$. Then $d(\alpha \tau)_i
\neq d(\beta \tau)_i$ but $s(\tau) \Lambda^{e_i} = s(\lambda')
\Lambda^{e_i} = \emptyset$ and hence $\MCE(\alpha \tau, \beta
\tau) = \emptyset$ as required.
\end{proof}

\section{Consequences of Aperiodicity}\label{is}

We now characterise aperiodicity of $\Lambda$ in terms of the
ideal structure of $C^*(\Lambda)$ and prove a version of the
Cuntz-Krieger uniqueness theorem.

\begin{theorem}\label{4.3rob-s2}
Let $(\Lambda, d)$ be a finitely aligned $k$-graph. Then the
following are equivalent:
\begin{enumerate}
\item $\Lambda$ is aperiodic.
\item Every non-zero ideal of $C^*(\Lambda)$ contains a
    vertex projection.
\item The $\partial \Lambda$ representation $\pi_S$ is
    faithful.
\end{enumerate}
\end{theorem}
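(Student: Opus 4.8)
The plan is to prove the cycle $(1)\Rightarrow(2)\Rightarrow(3)\Rightarrow(1)$; the substance is in $(1)\Rightarrow(2)$, which is a Cuntz--Krieger uniqueness theorem phrased in terms of ideals, and which I would prove by a direct reduction argument rather than by appealing to a faithful expectation or a groupoid model.

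For $(1)\Rightarrow(2)$, fix a nonzero ideal $I$ and choose a nonzero positive $a\in I$. I would approximate $a$ within $\varepsilon$ by a finite sum $b=\sum_{(\mu,\nu)\in F}b_{\mu\nu}\,s_\mu s_\nu^*$, which by Lemma~\ref{2.7}(iv) may be taken with $s(\mu)=s(\nu)$ in every term, and split $b$ into its diagonal part $\sum_{\mu}b_{\mu\mu}\,s_\mu s_\mu^*$ and its off-diagonal part. After first compressing to the corner determined by a dominant diagonal term $s_{\mu_0}s_{\mu_0}^*$ (one whose coefficient is comparable to $\|a\|$), the goal becomes: find a single path $\rho$ so that compressing by $s_\rho$ kills every off-diagonal term while sending the dominant diagonal term to a positive multiple of the vertex projection $s_{s(\rho)}$. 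The engine here is aperiodicity in its finite-path form: for each of the finitely many off-diagonal pairs $(\mu,\nu)$ (distinct, with common source) there is $\tau$ with $\MCE(\mu\tau,\nu\tau)=\emptyset$, which by Lemma~\ref{2.7}(i) makes the corresponding range projections orthogonal and forces the term to vanish under the appropriate compression. Running over the off-diagonal pairs one at a time and using Lemma~\ref{c5} to keep the successive extensions exhaustive and compatible, I would assemble a single $\rho$ that works for all pairs simultaneously. The resulting compression lies in $I$ and is within a controlled error of $c\,s_{s(\rho)}$ with $c$ comparable to $\|a\|$; a functional-calculus argument in the corner $s_{s(\rho)}C^*(\Lambda)s_{s(\rho)}$ then places $s_{s(\rho)}$ in $I$.

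The implication $(2)\Rightarrow(3)$ is immediate: $\ker\pi_S$ is an ideal, so if it were nonzero then by $(2)$ it would contain some vertex projection $s_v$; but $\pi_S(s_v)=S_v\neq0$ by Remark~\ref{0rep}, a contradiction, whence $\pi_S$ is faithful. For $(3)\Rightarrow(1)$ I would prove the contrapositive. If $\Lambda$ is not aperiodic then by Proposition~\ref{teqnlp} it has local periodicity $m,n$ at some $v$, so $\sigma^m(x)=\sigma^n(x)$ for every $x\in v(\partial\Lambda)$. Writing $t=\sigma^m(x)=\sigma^n(x)$, $\mu=x(0,m)$ and $\nu=x(0,n)$ we have $\mu t=x=\nu t$, and tracking how the operators $S_\mu S_\nu^*$ act through the common tail $t$ I would exhibit two distinct elements of $C^*(\Lambda)$ with the same image under $\pi_S$. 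Their difference is then a nonzero element of $\ker\pi_S$, its nonvanishing in $C^*(\Lambda)$ being confirmed by the same diagonal reduction used in the first part, so $\pi_S$ is not faithful.

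The main obstacle is the reduction step in $(1)\Rightarrow(2)$: passing from the pairwise, one-extension-at-a-time aperiodicity condition $\MCE(\mu\tau,\nu\tau)=\emptyset$ to a single compression that annihilates all off-diagonal terms at once, while preserving a diagonal vertex projection and keeping the bookkeeping of degrees and exhaustiveness (via Lemma~\ref{c5}) under control. Everything else is either immediate or a routine functional-calculus estimate.
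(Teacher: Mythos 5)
Your skeleton (the cycle $(1)\Rightarrow(2)\Rightarrow(3)\Rightarrow(1)$, Remark~\ref{0rep} for $(2)\Rightarrow(3)$, and the contrapositive via Proposition~\ref{teqnlp} for $(3)\Rightarrow(1)$) matches the paper, and your $(1)\Rightarrow(2)$ is a genuinely different route from the paper's (which deduces it from Theorem~\ref{ck}, proved via the norm estimate of Lemma~\ref{C} inside the reduction machinery of \cite{rsy2}). But that route, as you describe it, has two genuine gaps. First, a ``dominant diagonal term $s_{\mu_0}s_{\mu_0}^*$ whose coefficient is comparable to $\|a\|$'' need not exist: the projections $s_\mu s_\mu^*$ are not mutually orthogonal --- if $\mu_{i+1}$ extends $\mu_i$ then $s_{\mu_{i+1}}s_{\mu_{i+1}}^*\le s_{\mu_i}s_{\mu_i}^*$ --- so for nested paths $\mu_1,\dots,\mu_n$ the diagonal element $\tfrac{1}{n}\sum_{i\le n}s_{\mu_i}s_{\mu_i}^*$ has norm at least $1$ while every coefficient is $1/n$; since $n$ grows as your approximation $b$ improves, the circularity is fatal, not cosmetic. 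The repair requires ingredients absent from your outline: a faithful conditional expectation onto $\clsp\{s_\mu s_\mu^*\}$ (whose existence and faithfulness for finitely aligned $\Lambda$ already rest on the gauge expectation and the AF structure of the core, i.e.\ the \cite{rsy2} machinery your direct argument was meant to bypass) to normalise $a$; a \emph{minimal projection} $q$ of the finite-dimensional commutative algebra generated by the relevant $s_\mu s_\mu^*$ in place of a single large coefficient (note that compressing by $s_{\mu_0}s_{\mu_0}^*$ does not kill the other diagonal terms, which may carry negative coefficients); and then (CK4) together with finite alignment to locate an honest range projection $s_\rho s_\rho^*$ beneath $q$.

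Second, the off-diagonal killing. Expanding with (CK3), the compression $s_\rho^* s_\mu s_\nu^* s_\rho$ vanishes when there is \emph{no} path $\pi\in s(\mu)\Lambda$ with both $\mu\pi$ and $\nu\pi$ extending $\rho$. The aperiodicity witness $\tau$ for $(\mu,\nu)$ lives at $s(\mu)=s(\nu)$ and secures this only when $\rho$ itself extends $\mu\tau$ (or $\nu\tau$); since your finitely many pairs have distinct and generally incompatible left-hand paths $\mu$, no single $\rho$ can extend $\mu\tau_{\mu,\nu}$ for every pair, and Lemma~\ref{c5} is not the relevant tool here --- exhaustiveness plays no role in this assembly, and orthogonality of $s_{\mu\tau}s_{\mu\tau}^*$ and $s_{\nu\tau}s_{\nu\tau}^*$ from Lemma~\ref{2.7}(i) does not by itself annihilate $s_\mu s_\nu^*$ under an unspecified compression. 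There are two ways to close this, neither in your proposal: the paper's, which first reduces via \cite[Proposition~4.10]{rsy2} to the case where \emph{all} paths in the sum share one source $v$, so that Lemma~\ref{pre_ck} yields a single $\tau\in v\Lambda$ separating every pair, and then compresses by the projections $Q_n=\sum_\rho s_{\rho\tau}s_{\rho\tau}^*$ indexed by the paths of the sum themselves (Lemma~\ref{C}; this keeps the entire equal-degree part isometrically rather than one diagonal entry, which also sidesteps the first gap); or an inductive repair of your scheme in which, whenever a pair $(\mu,\nu)$ survives the current $\rho$, one picks $\pi$ with $\mu\pi=\rho\eta$, $\nu\pi=\rho\theta$ and applies aperiodicity to the pair $(\mu\pi,\nu\pi)$ at \emph{its} source, replacing $\rho$ by $\mu\pi\tau'$; dead pairs stay dead under further extension, so one pass terminates. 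Finally, a smaller but real slip in $(3)\Rightarrow(1)$: the nonvanishing of your kernel element cannot be ``confirmed by the same diagonal reduction used in the first part,'' since that reduction uses aperiodicity, which fails in this branch; the paper instead applies $\id+\gamma_\omega$ with $\omega^{d(\nu)-d(\mu)}=-1$ to produce $2s_{\mu\alpha}s_{\mu\alpha}^*\neq 0$, and you need this (or the faithfulness of the gauge expectation) in its place.
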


The bulk of the work goes into (1) $\Rightarrow$ (2). This is
the Cuntz-Krieger uniqueness theorem and we prove it in the
next subsection. The implication $(2) \Rightarrow (3)$ follows
from Remark~\ref{0rep}. We therefore begin by proving $(3)
\Rightarrow (1)$. We first establish two preliminary results.

\begin{lemma}\label{*}
Let $\Lambda$ be a finitely aligned $k$-graph and fix $v \in
\Lambda^0$. Suppose that $\Lambda$ has local periodicity $m,n$
at $v$. If $x \in v (\partial \Lambda)$ and $i \in
\{1,\ldots,k\}$ satisfy $d(x)_i < \infty$, then $m_i = n_i$.
\end{lemma}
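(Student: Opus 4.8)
The plan is to extract the conclusion directly from (LP2) by a degree count, using the hypothesis $d(x)_i < \infty$ at the single place where it matters.

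First I would record that, since $\Lambda$ has local periodicity $m,n$ at $v$, conditions (LP1) and (LP2) apply to the given path $x \in v(\partial\Lambda)$. In particular (LP1) gives $m \vee n \le d(x)$, hence $m \le d(x)$ and $n \le d(x)$; this is exactly what is needed for the shifts $\sigma^m(x)$ and $\sigma^n(x)$ to be defined. By construction $\sigma^m(x)$ is the graph morphism with domain $\Omega_{k, d(x) - m}$, so its degree is $d(\sigma^m(x)) = d(x) - m$, and likewise $d(\sigma^n(x)) = d(x) - n$ (with the convention $\infty - j = \infty$ for finite $j$ in any coordinate where $d(x)$ is infinite).

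The crux is then (LP2), namely $\sigma^m(x) = \sigma^n(x)$. Equal $k$-graph morphisms have the same domain, equivalently the same degree, so $d(x) - m = d(x) - n$. Reading off the $i$th coordinate gives $d(x)_i - m_i = d(x)_i - n_i$; because $d(x)_i < \infty$ this common quantity may be cancelled, yielding $m_i = n_i$, as required.

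I do not expect a genuine obstacle here, since the argument is a short degree count, but the one point deserving care, and the reason the lemma is not vacuous, is precisely the role of the hypothesis $d(x)_i < \infty$. In a coordinate where $d(x)_i = \infty$ the identity $d(x)_i - m_i = d(x)_i - n_i$ degenerates to $\infty = \infty$ and carries no information, so equality of the two shifts constrains $m$ and $n$ only in the finite coordinates of $d(x)$. Making explicit that equality of the shifted morphisms forces equality of their \emph{degrees}, rather than mere agreement on some overlapping sub-range, is the only subtlety in the proof.
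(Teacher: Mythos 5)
Your proposal is correct and follows exactly the paper's own argument: both proofs use (LP2) to conclude $d(\sigma^m(x)) = d(\sigma^n(x))$, i.e.\ $d(x) - m = d(x) - n$, and then cancel $d(x)_i$ in the $i$th coordinate using the hypothesis $d(x)_i < \infty$. The extra care you take in noting that equality of morphisms forces equality of degrees (domains), and that infinite coordinates carry no information, is implicit in the paper's one-line proof but is the same argument.
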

\begin{proof}
Suppose $x \in v (\partial \Lambda)$ and $i \in \{1,\ldots,k\}$
satisfy $d(x)_i < \infty$. Since $\sigma^{m} (x) = \sigma^{n}
(x)$ we have $d(x) - m = d(x) - n$ and in particular $\big(
d(x) - m \big)_i = \big( d(x) - n \big)_i$ which implies $m_i =
n_i$ since $d(x)_i < \infty$.
\end{proof}

\begin{lemma}\label{3.3}
Let $\Lambda$ be a finitely aligned $k$-graph. Suppose
$\Lambda$ has local periodicity $m,n$ at $v$. Then there exist
$\mu, \nu, \alpha \in \Lambda$ such that $r(\mu) = r(\nu) = v$,
$s(\mu) = s(\nu) = r(\alpha)$, $d(\mu) = m$, $d(\nu) = n$ and
$\mu \alpha z = \nu \alpha z$ for all $z \in s(\alpha) \partial
\Lambda$.
\end{lemma}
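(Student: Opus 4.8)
The plan is to read off $\mu$, $\nu$ and $\alpha$ from a single boundary path and then verify the stated identity by a prefix comparison. Since every vertex receives a boundary path (Remark~\ref{0rep}), I would fix $x \in v(\partial \Lambda)$. Local periodicity at $v$ gives $m \vee n \le d(x)$ and $\sigma^m(x) = \sigma^n(x)$; applying $r$ to the latter yields $x(m) = x(n) =: w$. I would then set $\mu := x(0,m)$, $\nu := x(0,n)$ and $\alpha := x(n, m\vee n)$. These immediately satisfy $r(\mu) = r(\nu) = v$, $s(\mu) = s(\nu) = r(\alpha) = w$, $d(\mu) = m$ and $d(\nu) = n$, and by the factorisation property $\nu\alpha = x(0, m\vee n)$, so that $(\nu\alpha)(0,m) = x(0,m) = \mu$ and $d(\nu\alpha) = m \vee n \ge m$.

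The substance is the identity $\mu\alpha z = \nu\alpha z$ for all $z \in s(\alpha)(\partial\Lambda)$. First I would check it even makes sense: by Lemma~\ref{addsub}(2) both $\mu\alpha z$ and $\nu\alpha z$ lie in $v(\partial\Lambda)$, and by Lemma~\ref{*} every element of $v(\partial\Lambda)$ has infinite $i$-th coordinate in each coordinate $i$ with $m_i \ne n_i$. Since $m$ and $n$ agree in the remaining coordinates, it follows that $d(\mu\alpha z) = d(\nu\alpha z)$, so the two morphisms share a common domain and equality is a meaningful assertion.

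The key step is the following characterisation, for $Z \in w(\partial\Lambda)$ with $d(\mu Z) = d(\nu Z)$: one has $\mu Z = \nu Z$ if and only if $(\nu Z)(0,m) = \mu$. The forward direction is immediate. For the converse I would use that $\sigma^m(\mu Z) = Z$ by definition of $\mu Z$, while $\sigma^m(\nu Z) = \sigma^n(\nu Z) = Z$ because $\nu Z \in v(\partial\Lambda)$ and $\Lambda$ has local periodicity $m,n$ at $v$ (this is the only place local periodicity enters). Hence for every finite $\ell$ with $m \le \ell \le d(\mu Z)$ one has $(\mu Z)(0,\ell) = \mu \cdot Z(0,\ell - m)$ and $(\nu Z)(0,\ell) = (\nu Z)(0,m)\cdot Z(0,\ell - m)$, and these agree precisely when $(\nu Z)(0,m) = \mu$. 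Since such $\ell$ are cofinal among all finite degrees $\le d(\mu Z)$, and a $k$-graph morphism is determined by its prefixes $(\cdot)(0,\ell)$ over a cofinal family of degrees, equality of all these prefixes forces $\mu Z = \nu Z$.

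Finally I would apply this with $Z = \alpha z$, which lies in $w(\partial\Lambda)$ with $d(\mu Z) = d(\nu Z)$ by the degree computation above. Because $d(\nu\alpha) = m \vee n \ge m$, the degree-$m$ prefix of $\nu\alpha z$ is already determined by $\nu\alpha$ alone, so $(\nu\alpha z)(0,m) = (\nu\alpha)(0,m) = \mu$ for every $z$, and the characterisation yields $\mu\alpha z = \nu\alpha z$. The main obstacle is the characterisation step: one must resist trying to prove $\mu z = \nu z$ directly (which fails in general, since $\mu$ and $\nu$ have different degrees and the initial segments of $z$ are uncontrolled), and instead exploit that local periodicity forces $\mu Z$ and $\nu Z$ to have the same $m$-shift. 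This reduces equality to the single prefix condition $(\nu Z)(0,m) = \mu$, which the synchronising tail $\alpha$ is chosen precisely to guarantee.
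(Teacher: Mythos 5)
Your proposal is correct and is essentially the paper's own proof up to a mirror symmetry: the paper sets $\alpha = x(m, m \vee n)$ and applies local periodicity to $\mu\alpha z$ (computing $\mu\alpha z = \nu\,\sigma^n(\mu\alpha z) = \nu\,\sigma^m(\mu\alpha z) = \nu\alpha z$), whereas you set $\alpha = x(n, m\vee n)$ and apply it to $\nu\alpha z$. Your additional degree check and prefix-cofinality argument just make explicit the uniqueness facts (a graph morphism is determined by a prefix together with the corresponding shift) that the paper's one-line computation uses implicitly.
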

\begin{proof}
Fix $x \in v (\partial \Lambda)$. Since $\Lambda$ has local
periodicity $m,n$ at $v$ we know that $m \vee n \leq d(x)$. Let
$\mu = x(0, m)$, $\nu = x(0, n)$ and $\alpha = x(m, m \vee n)$.
Then $s(\mu) = r \big( \sigma^m (x) \big) = r \big( \sigma^n
(x) \big) = s(\nu)$. Fix $z \in s(\alpha) \partial \Lambda$.
Since $(\mu \alpha z)(0, n) = \nu$ and $d(\mu) = m$ we have
\begin{flalign*}
&&&\mu \alpha z = \nu \sigma^n (\mu \alpha z) = \nu \sigma^m
(\mu \alpha z) = \nu \alpha z. &\qedhere
\end{flalign*}
\end{proof}

Recall that given a finitely aligned $k$-graph $\Lambda$, there
is a strongly continuous \emph{gauge action} $\gamma : \T
\rightarrow \Aut \big( C^*(\Lambda) \big)$ given using
multi-index notation by $\gamma_z(s_\lambda) = z ^{d(\lambda)}
s_\lambda$.

\begin{proof}[Proof of $(3) \Rightarrow (1)$ in Theorem~\ref{4.3rob-s2}]\label{pfof3=>1}
We prove the contrapositive statement. Suppose $\Lambda$ is not
aperiodic. Then Proposition~\ref{teqnlp} implies that there
exist $v \in \Lambda^0$ and distinct $m, n \in \N^k$ such that
$\Lambda$ has local periodicity $m, n$ at $v$. By
Lemma~\ref{3.3} there exist $\mu, \nu, \alpha \in \Lambda$ such
that $r(\mu) = r(\nu) = v$, $s(\mu) = s(\nu) = r(\alpha)$,
$d(\mu) = m$, $d(\nu) = n$ and $\mu \alpha y = \nu \alpha y$
for every $y \in s(\alpha) \partial \Lambda$.

We claim that $a := s_{\mu \alpha} s_{\mu \alpha}^* - s_{\nu
\alpha} s_{\mu \alpha}^* \in \ker(\pi_S) \backslash \{0\}$. To
see $a \neq 0$ we check that for $\omega \in \mathbb{T}^k$ with
$\omega^{d(\nu) - d(\mu)} = -1$ we have $(\operatorname{id} +
\gamma_\omega) (a) = 2 s_{\mu \alpha} s_{\mu \alpha}^* \neq 0$.
To see that $\pi_S (a) = 0$ we check directly using our choice
of $\mu, \nu, \alpha$ that $\pi_S (a) \zeta_x = 0$ for all $x
\in \partial \Lambda$. The details are the same as
\cite[Proposition~3.5]{robsi1}.
\end{proof}

\subsection{The Cuntz-Krieger Uniqueness Theorem}

We now use our definition of aperiodicity to prove a version of
the Cuntz-Krieger uniqueness theorem. We start with a technical
lemma.

\begin{lemma}\label{pre_ck}
Suppose $(\Lambda, d)$ is an aperiodic finitely aligned
$k$-graph. Fix $v \in \Lambda^0$ and let $H$ be a finite subset
of $\Lambda v$. Then there exists $\tau \in v \Lambda$ such
that $\MCE(\alpha \tau, \beta \tau) = \emptyset$ for every pair
of distinct paths $\alpha, \beta \in H$.
\end{lemma}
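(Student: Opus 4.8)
The plan is to separate the finitely many distinct pairs in $H$ one at a time, accumulating a single $\tau$ that works for all of them at once. Everything rests on one monotonicity observation: emptiness of $\MCE$ is stable under right-extension by a common path. I would record this first. Suppose $\mu, \nu \in \Lambda$ satisfy $s(\mu) = s(\nu)$ and $\MCE(\mu, \nu) = \emptyset$, and let $\rho \in s(\mu)\Lambda$. Then $\MCE(\mu\rho, \nu\rho) = \emptyset$, because any $\psi \in \MCE(\mu\rho, \nu\rho)$ has degree $(d(\mu) + d(\rho)) \vee (d(\nu) + d(\rho)) \geq d(\mu) \vee d(\nu)$, so the restriction $\psi\big(0, d(\mu)\vee d(\nu)\big)$ is defined; since $\psi(0, d(\mu)) = \mu$ and $\psi(0, d(\nu)) = \nu$, this restriction lies in $\MCE(\mu, \nu)$, contradicting emptiness.

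I also need that right-extension preserves distinctness: if $\alpha \neq \beta$ lie in $\Lambda v$ and $\rho \in v\Lambda$, then $\alpha\rho \neq \beta\rho$. This is immediate from the factorisation property, since $\alpha\rho = \beta\rho$ would force $d(\alpha) = d(\beta)$ and then uniqueness of the factorisation of $\alpha\rho = \beta\rho$ into a degree-$d(\alpha)$ path followed by a degree-$d(\rho)$ path would give $\alpha = \beta$. With these two facts I would proceed by induction on the number $N$ of unordered pairs of distinct elements of $H$, enumerated as $(\alpha_1,\beta_1),\dots,(\alpha_N,\beta_N)$. Set $\tau_0 := v$, which lies in $v\Lambda$ and separates the (empty) list of pairs vacuously; if $|H| \leq 1$ there are no pairs and $\tau = v$ already works. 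Given $\tau_{j-1} \in v\Lambda$ with $\MCE(\alpha_i\tau_{j-1}, \beta_i\tau_{j-1}) = \emptyset$ for all $i < j$, the paths $\alpha_j\tau_{j-1}$ and $\beta_j\tau_{j-1}$ are distinct with common source $s(\tau_{j-1})$ by the distinctness remark, so aperiodicity (Definition~\ref{AP}) supplies $\rho \in s(\tau_{j-1})\Lambda$ with $\MCE(\alpha_j\tau_{j-1}\rho, \beta_j\tau_{j-1}\rho) = \emptyset$. I would then put $\tau_j := \tau_{j-1}\rho \in v\Lambda$: the $j$-th pair is now separated by construction, and the earlier pairs stay separated by the monotonicity observation applied with extension $\rho$. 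After $N$ steps, $\tau := \tau_N$ lies in $v\Lambda$ and satisfies $\MCE(\alpha\tau, \beta\tau) = \emptyset$ for every distinct pair $\alpha,\beta \in H$.

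I do not anticipate a serious obstacle here; the content is essentially bookkeeping around the two elementary facts above. The only points demanding care are the restriction argument establishing monotonicity and checking that the range and source data line up at each stage, so that every concatenation $\alpha_j\tau_{j-1}\rho$ is defined and each invocation of Definition~\ref{AP} is legitimate.
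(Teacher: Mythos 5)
Your proof is correct and follows essentially the same strategy as the paper's: both handle the pairs in $H$ one at a time, applying aperiodicity (Definition~\ref{AP}) to each pair extended by the accumulated path, and both rest on the stability of $\MCE(\mu\sigma,\nu\sigma)=\emptyset$ under further right-extension by a common path. The only differences are organizational --- the paper runs an induction on $|H|$ with an inner sequential loop over the pairs involving the newly added element, whereas you run a single induction over all unordered pairs --- and that you prove explicitly the two small facts (monotonicity of empty $\MCE$ and preservation of distinctness under right-extension) which the paper uses implicitly.
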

\begin{proof}
We proceed by induction on $|H|$. If $|H| = 2$ then
Lemma~\ref{pre_ck} reduces to the definition of aperiodicity.

Now suppose the result is true whenever $|H| = k$. Fix $H
\subseteq \Lambda v$ with $|H| = k + 1$. Fix $\alpha \in H$ and
list $H \backslash \{ \alpha \} = \{ \beta_1,\ldots, \beta_k
\}$. By the inductive hypothesis there exists $\tau_\alpha \in
v \Lambda$ such that $\MCE(\beta_1 \tau_\alpha, \beta_2
\tau_\alpha) = \emptyset$ for every distinct $\beta_1, \beta_2
\in H \backslash \{ \alpha \}$. Inductively applying
aperiodicity, we obtain paths $\tau_1,\dots, \tau_k$ such that
for each $j \leq k$,
\begin{align*}
\MCE(\alpha \tau_\alpha \tau_1 \ldots \tau_j,\; \beta_j \tau_\alpha \tau_1 \ldots \tau_j) = \emptyset.
\end{align*}

We claim that $\tau = \tau_\alpha \tau_1 \ldots \tau_k$
satisfies $\MCE(\mu \tau, \nu \tau) = \emptyset$ for all
distinct $\mu, \nu \in H$. Fix distinct $\mu, \nu \in H$. First
suppose that $\mu, \nu \in H \setminus \{ \alpha \}$. Then
$\MCE(\mu \tau_\alpha, \nu \tau_\alpha) = \emptyset$ and since
$\tau_\alpha$ is an initial segment of $\tau$ it follows that
$\MCE(\mu \tau, \nu \tau) = \emptyset$. Now suppose that one of
$\mu, \nu$ is equal to $\alpha$; without loss of generality
suppose $\mu = \alpha$. It remains to show that $\MCE(\alpha
\tau, \nu \tau) = \emptyset$. Since $\nu \neq \alpha$ we have
$\nu = \beta_p$ for some $1 \leq p \leq k$. By construction
$\MCE(\alpha \tau_\alpha \tau_1 \ldots \tau_p, ~ \beta_p
\tau_\alpha \tau_1 \ldots \tau_p) = \emptyset$. Since $\tau_1
\ldots \tau_p$ is an initial segment of $\tau$ it follows that
$\MCE(\alpha \tau, \nu \tau) = \emptyset$.
\end{proof}

The following technical lemma allows us to replace the use of
condition~(B) in the proof of the Cuntz-Krieger uniqueness
theorem \cite[Theorem~4.5]{rsy2} with our aperiodicity
hypothesis.

\begin{lemma}\label{C}
Let $\{ t_\lambda : \lambda \in \Lambda \}$ be a Cuntz-Krieger
$\Lambda$-family with $t_v \neq 0$ for each $v \in \Lambda^0$.
Fix $v \in \Lambda^0$ and let $H$ be a finite subset of
$\Lambda v$. Fix $N \in \N^k$ and a linear combination $a =
\sum_{\mu, \nu \in H} a_{\mu, \nu} t_\mu t_\nu^*$ such that
whenever $a_{\mu, \nu} \neq 0$ we have $\mu \in \Lambda^{\leq
N}$. Also let $a_0 = \sum_{\mu, \nu \in H, d(\mu) = d(\nu)}
a_{\mu, \nu} t_\mu t_\nu^*$. Then there exists a
norm-decreasing linear map $Q : C^*(\Lambda) \rightarrow
C^*(\Lambda)$ such that $\| Q(a_0) \| = \| a_0 \|$ and $Q(a_0)
= Q(a)$. In particular, $\| a_0 \| = \| a \|$.
\end{lemma}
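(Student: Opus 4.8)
The plan is to produce $Q$ as a compression that discards the off-diagonal terms of $a$ (those with $d(\mu) \ne d(\nu)$) while losing none of its norm; aperiodicity is exactly what makes such a compression possible. Before invoking aperiodicity I would record two structural simplifications. First, since $H \subseteq \Lambda v$ every $\mu, \nu$ occurring has $s(\mu) = s(\nu) = v$. Second, because $a_{\mu,\nu} \neq 0$ forces $\mu \in \Lambda^{\leq N}$, Lemma~\ref{2.7}(ii) gives $t_\mu^* t_{\mu'} = \delta_{\mu,\mu'} t_v$ for the left-hand legs, so the projections $\{t_\mu t_\mu^* : \mu \in H \cap \Lambda^{\leq N}\}$ are mutually orthogonal and their sum $P$ satisfies $Pa = a$. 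Averaging the gauge action $\gamma$ over $\mathbb{T}^k$ also yields a contractive linear map $\Phi$ with $\Phi(a) = a_0$, so $\|a_0\| \leq \|a\|$ comes for free and the real content of the lemma is the reverse inequality $\|a\| \leq \|a_0\|$, which is what the map $Q$ must deliver.

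Next I would apply aperiodicity. Running Lemma~\ref{pre_ck} on the finite set $H \subseteq \Lambda v$ produces $\tau \in v\Lambda$ with $\MCE(\alpha\tau, \beta\tau) = \emptyset$ for all distinct $\alpha, \beta \in H$; by (CK3) this means $t_{\alpha\tau}^* t_{\beta\tau} = 0$, so the extended legs $\{t_{\alpha\tau} t_{\alpha\tau}^* : \alpha \in H\}$ are mutually orthogonal. Using these projections together with $P$ I would build $Q$ as a compression (a pinching by the orthogonal family manufactured from $\tau$) and verify three things: (a) $Q$ annihilates every term $t_\mu t_\nu^*$ with $d(\mu) \ne d(\nu)$ — this is where the separation $\MCE(\alpha\tau, \beta\tau) = \emptyset$ combines with the degree mismatch to force cancellation — so that $Q(a) = Q(a_0)$; (b) $Q$ is isometric on the diagonal part, $\|Q(a_0)\| = \|a_0\|$, resting on the orthogonality relations of Lemma~\ref{2.7} and on each $t_v$ being nonzero; and (c) the compression retains the full norm of $a$, $\|Q(a)\| = \|a\|$. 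Granting these, $\|a\| = \|Q(a)\| = \|Q(a_0)\| = \|a_0\|$, and $Q$ is norm-decreasing by construction.

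The hard part is reconciling (a) with (c): the compression must throw away the off-diagonal terms and yet retain all of the norm of $a$. Aperiodicity is precisely the tool that breaks this apparent tension — the path $\tau$ produced by Lemma~\ref{pre_ck} pins down a region in which $a$ already acts as its diagonal $a_0$, so that compressing into that region costs nothing in norm while automatically killing the degree-mismatched terms. I expect the bookkeeping for the vanishing in (a) and the norm estimate in (c), carried out while keeping $Q$ both globally norm-decreasing and valued in $C^*(\Lambda)$, to be the most delicate steps; I would model the explicit computation on the analogous reductions in the cited work of Robertson--Sims and Raeburn--Sims--Yeend, using the gauge-averaging identity $\Phi(a) = a_0$ as a sanity check throughout.
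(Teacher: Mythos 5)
Your core mechanism is the right one---run Lemma~\ref{pre_ck} on $H$ to obtain $\tau$ with $\MCE(\alpha\tau,\beta\tau)=\emptyset$ for distinct $\alpha,\beta\in H$, then build a compression from the projections $t_{\alpha\tau}t_{\alpha\tau}^*$ that kills degree-mismatched terms and is isometric on the diagonal part---and this matches the paper. But your framing of what the lemma must deliver is backwards, and the error is fatal to the plan. You assert that $\|a_0\|\le\|a\|$ ``comes for free'' by averaging the gauge action, so that the real content is the reverse inequality $\|a\|\le\|a_0\|$. The gauge action lives on the universal algebra $C^*(\Lambda)$, not on the $C^*$-algebra generated by an arbitrary Cuntz--Krieger family: $a$ and $a_0$ are combinations of the $t_\mu t_\nu^*$, i.e.\ they live in the image of an arbitrary representation $\pi_t$, and $\pi_t\big(C^*(\Lambda)\big)$ carries a gauge action only when $\ker\pi_t$ is gauge-invariant---which is essentially what the Cuntz--Krieger uniqueness theorem is trying to establish. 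So your gauge-averaging step is circular; the inequality $\|a_0\|\le\|a\|$ is precisely the content of the lemma, obtained from the three stated properties of $Q$ via $\|a_0\|=\|Q(a_0)\|=\|Q(a)\|\le\|a\|$, and it is the only consequence used in the proof of Theorem~\ref{ck}.

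Worse, the direction you set out to prove is false. Take $a=t_\mu t_\nu^*$ a single term with $d(\mu)\ne d(\nu)$ (and $N=d(\mu)$, so the hypothesis holds); then $a_0=0$, while $aa^*=t_\mu t_\mu^*\ne 0$ since $t_\mu^*t_\mu=t_v\ne 0$. Hence $\|a\|\le\|a_0\|$ fails, and so does your requirement (c) that $\|Q(a)\|=\|a\|$: any norm-decreasing $Q$ with $Q(a)=Q(a_0)$ gives $\|Q(a)\|\le\|a_0\|=0$. (The statement's final sentence ``In particular, $\|a_0\|=\|a\|$'' is a misprint for $\|a_0\|\le\|a\|$; only the inequality follows from the properties of $Q$, and only it is needed later---you should not build a proof around the equality.) Finally, your construction of $Q$ is vague at the one point where precision matters: pinching by the \emph{individual} projections $t_{\alpha\tau}t_{\alpha\tau}^*$, $\alpha\in H$, would also annihilate the same-degree off-diagonal matrix units $t_\alpha t_\beta^*$ with $\alpha\ne\beta$, $d(\alpha)=d(\beta)$, destroying the isometry in your step (b). The paper instead groups the projections by degree, setting $Q_n:=\sum_{\rho\in H\cap\Lambda^{\le N}\cap\Lambda^n}t_{\rho\tau}t_{\rho\tau}^*$ and $Q(b):=\sum_{n\le N}Q_nbQ_n$; then the $Q_n$ are mutually orthogonal (so $Q$ is norm-decreasing), $Q$ carries the matrix units $t_\alpha t_\beta^*$ with $d(\alpha)=d(\beta)$ to the nonzero matrix units $t_{\alpha\tau}t_{\beta\tau}^*$---hence restricts to an isomorphism, and so an isometry, on the finite-dimensional $C^*$-algebra they span---and the choice of $\tau$ together with (CK3) forces $Q(t_\alpha t_\beta^*)=0$ whenever $d(\alpha)\ne d(\beta)$.
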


\begin{remark*}
Lemma 4.11 of \cite{rsy2} claims that $Q$ maps $\pi \big(
C^*(\Lambda) \big)$ to $\pi \big( C^*(\Lambda)^\gamma \big)$.
However this assertion is not proved and is not obviously true.
Fortunately it is also not needed.
\end{remark*}

For the proof of Lemma~\ref{C} the following remark will prove
useful.

\begin{remark}\label{B}
If $\alpha, \beta \in \Lambda$ satisfy $s(\alpha) = s(\beta)$,
$d(\alpha) = d(\beta)$ and $\alpha \in \Lambda^{\leq n}$ then
$\beta \in \Lambda^{\leq n}$ also. To see this suppose
$d(\beta)_i < n_i$. Then $d(\alpha)_i = d(\beta)_i < n_i$, and
$\alpha \in \Lambda^{\leq n}$ forces $s(\beta)\Lambda^{e_i} =
s(\alpha)\Lambda^{e_i} = \emptyset$.
\end{remark}

\begin{proof}[Proof of Lemma~\ref{C}]
Since $H \subseteq \Lambda v$ is finite, Lemma~\ref{pre_ck}
implies that there exists $\tau \in v\Lambda$ such that
$\MCE(\alpha \tau, \beta \tau) = \emptyset$ for every pair of
distinct paths $\alpha, \beta \in H$. For each $n \leq N$ let
\begin{align*}
    Q_n &:= \sum_{\rho \in H \cap \Lambda^{\leq N} \cap \Lambda^n} t_{\rho \tau} t_{\rho \tau}^*
\intertext{and define $Q : C^*(\Lambda) \rightarrow C^*(\Lambda)$ by}
    Q(b) &:= \sum_{n \leq N} Q_n b Q_n.
\end{align*}
Lemma~\ref{2.7} implies that the $Q_n$ are mutually orthogonal
projections, so $Q \big( C^*(\Lambda) \big) \cong \oplus_{n
\leq N} Q_n C^*(\Lambda) Q_n$. Hence $\| Q(a) \| = \max_{n \leq
N} \| Q_n a Q_n \| \leq \|a\|$, so $Q$ is norm-decreasing; it
is clearly linear.

We will now show that $\| Q(a_0) \| = \| a_0 \|$. Consider
\[
G = \lsp \{ t_\alpha t_\beta^* : \alpha, \beta \in H, d(\alpha) = d(\beta), \alpha \in \Lambda^{\leq N} \}.
\]
We will show that $G$ is a finite-dimensional $C^*$-subalgebra
of $C^*(\Lambda)$ and use this to see that $Q$ restricts to an
isomorphism of $G$ onto $Q(G)$. It then follows that $\| Q(a_0)
\| = \| a_0 \|$.

For each $n \leq N$ let $F_n$ denote the set $\{ t_\alpha
t_\beta^* : \alpha, \beta \in H, d(\alpha) = n = d(\beta),
\alpha \in \Lambda^{\leq N} \}$. Fix $n \leq N$ and elements
$t_\alpha t_\beta^*$ and $t_\mu t_\nu^*$ of $F_n$. Then $\beta,
\mu \in \Lambda^n$ and (CK3) forces
\begin{align*}
(t_\alpha t_\beta^*)(t_\mu t_\nu^*) = t_\alpha (\delta_{\beta, \mu} t_v) t_\nu^* = \delta_{\beta, \mu} t_\alpha t_\nu^*.
\end{align*}
A standard argument using (CK3) and that $t_v \neq 0$ shows
that each $t_\alpha t_\beta^*$ is nonzero. Hence $t_\alpha
t_\beta^* \mapsto \theta_{\alpha, \beta}$ determines an
isomorphism $G_n := \lsp(F_n) \cong M_{H \cap \Lambda^{\leq N}
\cap \Lambda^n} (\C)$.

Fix distinct $m, n \leq N$. Let $t_\alpha t_\beta^* \in F_m$
and $t_\mu t_\nu^* \in F_n$. Since $\alpha \in \Lambda^{\leq
N}$ and $d(\alpha) = d(\beta)$, Remark~\ref{B} implies $\beta
\in \Lambda^{\leq N}$. Since $d(\beta) \neq d(\mu)$, we have
$\beta \neq \mu$, so Lemma~\ref{2.7} (ii) implies that
$(t_\alpha t_\beta^*)(t_\mu t_\nu^*) = 0$. Hence $G_m \perp
G_n$. Thus $G = \oplus_{n \leq N} G_n \cong \oplus_{n \leq N}
M_{H \cap \Lambda^{\leq N} \cap \Lambda^n} (\C)$ via $t_\alpha
t_\beta^* \mapsto \theta_{\alpha, \beta}$.

For $\alpha, \beta \in H$ such that $a_{\alpha, \beta} \neq
\emptyset$, we have $\alpha \in \Lambda^{\leq N}$ and we
calculate
\begin{align}
Q(t_{\alpha} t_{\beta}^*)
    &= \sum_{n \leq N} Q_n (t_{\alpha} t_{\beta}^*) Q_n \nonumber \\
    &= \sum_{n \leq N}
        \Big( \sum_{\rho \in H \cap \Lambda^{\leq N} \cap \Lambda^n} t_{\rho \tau} t_{\rho \tau}^* \Big)
        (t_{\alpha} t_{\beta }^*)
        \Big( \sum_{\rho' \in H \cap \Lambda^{\leq N} \cap \Lambda^n} t_{\rho' \tau} t_{\rho' \tau}^* \Big) \nonumber \\
    &= \sum_{n \leq N}
        \Big( \sum_{\rho, \rho' \in H \cap \Lambda^{\leq N} \cap \Lambda^n}
            t_{\rho \tau} t_\tau^* (t_\rho^* t_{\alpha}) (t_{\beta }^* t_{\rho'}) t_\tau t_{\rho' \tau}^*
        \Big) \nonumber \\
    &= \sum_{ \rho' \in H \cap \Lambda^{\leq N} \cap \Lambda^{d(\alpha)}}
            t_{\alpha \tau} (t_{\beta \tau}^* t_{\rho' \tau}) t_{\rho' \tau}^*
            \quad\text{by Lemma~\ref{2.7}(ii)}. \label{1}
\end{align}
If, in addition, $d(\alpha) = d(\beta)$, then $\beta \in
\Lambda^{\leq N}$ by Remark~\ref{B}, so Lemma~\ref{2.7}(ii)
implies $t_\beta^* t_{\rho'} = 0$ unless $\rho' = \beta$. So
continuing our calculation from above, with  $\alpha, \beta \in
H$, $\alpha \in \Lambda^{\leq N}$ and $d(\alpha) = d(\beta)$,
we have
\[
Q(t_{\alpha} t_{\beta}^*)
    = t_{\alpha \tau} t_{\alpha \tau}^* t_\alpha t_\beta^* t_{\beta \tau} t_{\beta \tau}^*
    = t_{\alpha \tau} t_{\beta \tau}^*
\]
In particular, for $t_\alpha t_\beta^*, t_\mu t_\nu^* \in
\bigcup_{n \leq N} F_n$,
\[
Q(t_{\alpha} t_{\beta}^*) Q(t_\mu t_\nu^*)
    = t_{\alpha \tau} t_{\beta \tau}^* t_{\mu \tau} t_{\nu \tau}^*
    = t_{\alpha \tau} t_\tau^* (t_\beta^* t_\mu) t_\tau t_{\nu \tau}^*
    = \delta_{\beta, \mu} t_{\alpha \tau} t_{\nu \tau}^*.
\]
So the set $\{ Q(t_{\alpha} t_{\beta}^*) : \alpha, \beta \in H,
d(\alpha) = d(\beta), \alpha \in \Lambda^{\leq N} \}$ is a
system of non-zero matrix units for an isomorphic copy of
$\oplus_{n \leq N} M_{\{ \alpha \tau : \alpha \in H \cap
\Lambda^{\leq N} \cap \Lambda^n \}} (\C)$. In particular
$t_\alpha t_\beta^* \mapsto t_{\alpha \tau} t_{\beta \tau}^*$
determines an isomorphism of $F_n$. Thus $Q$ restricts to an
isomorphism of $G$ onto $Q(G)$ forcing $\|Q(a_0)\| = \|a_0\|$.

It remains to show that $Q(a_0) = Q(a)$. Fix $\alpha, \beta \in
H$ such that $d(\alpha) \neq d(\beta)$ and $a_{\alpha, \beta}
\neq 0$. It suffices to show that $Q(t_\alpha t_\beta^*) = 0$.
Using~\eqref{1} we have
\[
Q(t_{\alpha} t_{\beta}^*)
    = \sum_{\rho' \in H \cap \Lambda^{\leq N} \cap \Lambda^n} t_{\alpha \tau} t_{\beta \tau}^* t_{\rho' \tau} t_{\rho' \tau}^*.
\]
Since $d(\beta) \neq d(\alpha)$ we have $\beta \neq \rho'$ for
all $\rho' \in H \cap \Lambda^{\leq N} \cap
\Lambda^{d(\alpha)}$. Thus $\MCE(\beta \tau, \rho' \tau) =
\emptyset$ for every $\rho' \in H \cap \Lambda^{\leq N} \cap
\Lambda^{d(\alpha)}$ by choice of $\tau$. Thus (CK3) forces
$Q(t_{\alpha} t_{\beta}^*) = 0$.
\end{proof}

We can now prove our version of the Cuntz-Krieger Uniqueness
Theorem.

\begin{theorem}[The Cuntz-Krieger Uniqueness Theorem]\label{ck}
Suppose $(\Lambda, d)$ is an aperiodic, finitely aligned
$k$-graph and suppose $\pi$ is a representation of
$C^*(\Lambda)$ such that $\pi(s_v) \neq 0$ for every $v \in
\Lambda^0$. Then $\pi$ is faithful.
\end{theorem}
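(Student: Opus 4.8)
The plan is to deduce faithfulness from isometry. Set $t_\lambda := \pi(s_\lambda)$; then $\{t_\lambda : \lambda \in \Lambda\}$ is a Cuntz--Krieger $\Lambda$-family with $t_v = \pi(s_v) \neq 0$ for every $v \in \Lambda^0$, so both the universal family $\{s_\lambda\}$ and its image $\{t_\lambda\}$ satisfy the standing hypothesis of Lemma~\ref{C}. As a $*$-homomorphism $\pi$ is automatically norm-decreasing, and by Lemma~\ref{2.7}(iv) the finite sums $a = \sum_{\mu,\nu} a_{\mu,\nu} s_\mu s_\nu^*$ (with $s(\mu) = s(\nu)$ in each nonzero term) are dense in $C^*(\Lambda)$; hence it suffices to prove $\|\pi(a)\| = \|a\|$ for every such $a$.

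The engine is Lemma~\ref{C}, which for a suitable element collapses the full norm to the norm of its diagonal part $a_0 = \sum_{d(\mu)=d(\nu)} a_{\mu,\nu}\, s_\mu s_\nu^*$. I would therefore first rewrite $a$ so that it meets the hypotheses of Lemma~\ref{C}: a finite path set $H$ with a common source $v$, all of whose relevant left legs $\mu$ lie in $\Lambda^{\leq N}$ for a single $N \in \N^k$. Concretely, one uses (CK3), (CK4) and Lemma~\ref{2.7} to expand each $s_\mu s_\nu^*$ into finitely many terms whose left legs have been pushed out to $\Lambda^{\leq N}$, and then organises the result by source. Applying Lemma~\ref{C} to the universal family gives $\|a\| = \|a_0\|$, and applying it verbatim to the image family $\{t_\lambda\}$ --- which is legitimate precisely because each $t_v \neq 0$ --- gives $\|\pi(a)\| = \|\pi(a_0)\|$.

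It then remains to see that $\pi$ is isometric on the diagonal part. The proof of Lemma~\ref{C} already exhibits the span of the diagonal matrix units $s_\alpha s_\beta^*$ (with $d(\alpha)=d(\beta)$ and $\alpha \in \Lambda^{\leq N}$) as a finite-dimensional $C^*$-subalgebra $G \cong \bigoplus_n M_{H \cap \Lambda^{\leq N} \cap \Lambda^n}(\C)$. Running the identical computation for $\{t_\lambda\}$, and using $t_v \neq 0$ to see (via (CK3), exactly as in Lemma~\ref{C}) that each $t_\alpha t_\beta^*$ is a nonzero matrix unit, shows that $\pi$ restricts to a homomorphism of $G$ onto $\pi(G)$ which is nonzero on each matrix block and hence injective; an injective $*$-homomorphism is isometric, so $\|a_0\| = \|\pi(a_0)\|$. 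Chaining the three equalities yields $\|a\| = \|a_0\| = \|\pi(a_0)\| = \|\pi(a)\|$, so $\pi$ is isometric on a dense $*$-subalgebra and therefore faithful.

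The main obstacle I anticipate is the reduction in the second paragraph --- massaging an arbitrary element of the dense subalgebra into the precise single-source, $\Lambda^{\leq N}$ form that Lemma~\ref{C} requires. This is where the Cuntz--Krieger relations must be invoked with care, and where one must keep track of sources so as not to stray outside the scope of Lemma~\ref{C}; the remainder of the argument is essentially formal once Lemma~\ref{C} and the matrix-unit structure in its proof are in hand.
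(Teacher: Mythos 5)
Your argument has a genuine gap, and it stems from taking the final sentence of Lemma~\ref{C} at face value. The assertion there that ``in particular, $\|a_0\| = \|a\|$'' is an overstatement: what the proof of Lemma~\ref{C} actually establishes is $\|a_0\| = \|Q(a_0)\| = \|Q(a)\| \le \|a\|$, and the equality is false in general. To see this, take $H = \{\mu, \nu\} \subseteq \Lambda v$ with $d(\mu) \neq d(\nu)$, $N = d(\mu)$, and $a = t_\mu t_\nu^*$: then $a_0 = 0$, while $a a^* = t_\mu t_\nu^* t_\nu t_\mu^* = t_\mu t_\mu^* \neq 0$, so $\|a_0\| < \|a\|$ even though all hypotheses of Lemma~\ref{C} hold. (The paper's own proof of Theorem~\ref{ck} uses only the inequality $\|a_0\| \le \|a\|$, applied to the image family.) Consequently the first two links of your chain $\|a\| = \|a_0\| = \|\pi(a_0)\| = \|\pi(a)\|$ both break. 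With the corrected lemma you get only $\|a_0\| \le \|a\|$ and $\|\pi(a_0)\| \le \|\pi(a)\|$, which together with your (correct) finite-dimensional step $\|a_0\| = \|\pi(a_0)\|$ yield merely $\|\pi(a)\| \ge \|a_0\|$: a lower bound on $\|\pi(a)\|$ in terms of the diagonal part, not isometry. This cannot be patched by rearranging the same three facts, since elementwise isometry of $\pi$ on the dense subalgebra is equivalent to the theorem you are trying to prove, and the off-diagonal terms genuinely can lose norm under $Q$.

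What is missing is the standard mechanism for converting the one-sided inequality into faithfulness, which is exactly what the paper imports from \cite{rsy2} (the opening paragraph of Section~4 and Proposition~4.1 there). In outline: your matrix-unit argument correctly shows that $\pi$ is isometric on each finite-dimensional piece of the core, hence injective on $C^*(\Lambda)^\gamma$; Lemma~\ref{C} applied \emph{only to the image family} $t_\lambda = \pi(s_\lambda)$ gives $\|\pi(a_0)\| \le \|\pi(a)\|$ on the dense subalgebra; since $a \mapsto a_0$ is implemented by the faithful conditional expectation $\Phi$ obtained by averaging over the gauge action, this inequality extends by continuity and shows that $b \mapsto \pi(\Phi(b))$ factors through $\pi$ as a norm-decreasing map. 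Then for $b \in \ker\pi$ one gets $\pi(\Phi(b^*b)) = 0$, hence $\Phi(b^*b) = 0$ by injectivity on the core, hence $b = 0$ by faithfulness of $\Phi$ on positive elements. Note that in this argument Lemma~\ref{C} is never applied to the universal family; your application of it there is both unnecessary and (as the counterexample shows) unjustified. Your reduction of a general finite sum to the common-source, $\Lambda^{\le N}$ form is handled the same way in the paper, via Equation~(4.4) and Proposition~4.10 of \cite{rsy2}, so that part of your sketch is sound.
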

\begin{proof}
The opening paragraph of \cite[section~4]{rsy2} together with
\cite[Proposition~4.1]{rsy2} show that it suffices to fix a
finite subset $H \subseteq \Lambda$ and scalars $\{ a_{\lambda,
\mu} : \lambda, \mu \in H \}$ and show that
\[
\Big\| \sum_{\lambda, \mu \in H, d(\lambda) = d(\mu)} a_{\lambda, \mu} \pi(s_\lambda s_\mu^*) \Big\|
    \leq \Big\| \sum_{\lambda, \mu \in H} a_{\lambda, \mu} \pi(s_\lambda s_\mu^*) \Big\|
\]
Equation~(4.4) and Proposition~4.10 of \cite{rsy2} show that we
may assume that there exist $v \in \Lambda^0$ and $N \in \N^k$
such that $H \subseteq v \Lambda$ and also that $a_{\lambda,
\mu} \neq \emptyset$ implies $\lambda \in \Lambda^{\leq N}$. We
are now in the situation of Lemma~\ref{C} with $t_\lambda =
\pi(s_\lambda)$ for every $\lambda \in \Lambda$, $a =
\sum_{\lambda, \mu \in H} a_{\lambda, \mu} t_\lambda t_\mu^*$
and $a_0 = \sum_{\lambda, \mu \in H, d(\lambda)= d(\mu)}
a_{\lambda, \mu} t_\lambda t_\mu^*$. Thus Lemma~\ref{C} gives
\[
\|a_0\| = \|Q(a_0)\| = \|Q(a)\| \leq \|a\|.
\]
as required.
\end{proof}

\begin{remark*}
We have really just recycled the proof of
\cite[Theorem~4.5]{rsy2}, which occupies all of
\cite[Section~4.2]{rsy2}, replacing \cite[Lemma~4.11]{rsy2} (in
which condition~(B) was invoked) with our Lemma~\ref{C} which
uses aperiodicity instead.
\end{remark*}

\begin{proof}[Proof of Theorem~\ref{4.3rob-s2}]
(1) $\Rightarrow$ (2) follows from Theorem~\ref{ck}, we proved
(3) $\Rightarrow$ (1) on page~\pageref{pfof3=>1} and (2)
$\Rightarrow$ (3) follows from Remark~\ref{0rep}.
\end{proof}

\section{Consequences of cofinality}\label{prf}

In this section we characterise cofinality in $\Lambda$ in
terms of the ideal structure in $C^*(\Lambda)$, and conclude by
proving our main result, Theorem~\ref{main}. The meat of
Theorem~\ref{ideal}, namely $(2) \Rightarrow (3)$, is identical
to \cite[Proposition~4.3]{shotwell} and we have only included
the argument for completeness. The key points of difference are
our formulation of cofinality in terms of finite paths, and the
fact that our proof is direct: our arguments do not appeal to
the classification of gauge-invariant ideals in $C^*(\Lambda)$
of \cite{sims2}.

\begin{theorem}\label{ideal}
Suppose $\Lambda$ is a finitely aligned $k$-graph. Then the
following are equivalent.
\begin{enumerate}
\item $\Lambda$ is cofinal;
\item for each $v \in \Lambda^0$ and each $x \in \partial
    \Lambda$ there exists $n \leq d(x)$ such that $v
    \Lambda x(n) \neq \emptyset$;
\item the only ideal of $C^*(\Lambda)$ which contains $s_v$
    for some $v \in \Lambda^0$, is $C^*(\Lambda)$; and
\item the only ideal of $C^*(\Lambda)$ which nontrivially
    intersects $C^*(\Lambda)^\gamma$ is $C^*(\Lambda)$.
\end{enumerate}
\end{theorem}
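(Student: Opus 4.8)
The plan is to prove the cycle of implications $(1)\Rightarrow(2)\Rightarrow(3)\Rightarrow(4)\Rightarrow(1)$, treating $(2)\Rightarrow(3)$ as the technical core (it is flagged as identical to Shotwell's Proposition~4.3) and the remaining three implications as comparatively short. For $(1)\Rightarrow(2)$ I would argue directly from the finite-path formulation of cofinality. Fix $v\in\Lambda^0$ and $x\in\partial\Lambda$, and set $w=r(x)$. Cofinality gives $E\in w\FE(\Lambda)$ with $v\Lambda s(\alpha)\neq\emptyset$ for every $\alpha\in E$. Since $x\in\partial\Lambda$ and $E\in x(0)\FE(\Lambda)$, the defining property of $\partial\Lambda$ (with $n=0$) produces $p\leq d(x)$ with $x(0,p)\in E$; writing $\alpha=x(0,p)$ we get $s(\alpha)=x(p)$, so taking $n=p$ yields $v\Lambda x(n)=v\Lambda s(\alpha)\neq\emptyset$, as required.

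\textbf{The implication $(2)\Rightarrow(3)$ is the main obstacle.} Let $I$ be an ideal containing $s_v$ for some $v$; I must show $s_w\in I$ for every $w\in\Lambda^0$, since the vertex projections together with (CK2)–(CK3) and Lemma~\ref{2.7}(iv) generate $C^*(\Lambda)$. The natural strategy is: first show that $s_w\in I$ whenever $w\Lambda v\neq\emptyset$, i.e.\ whenever there is a path from $v$ reachable to $w$ — this is routine from (CK2), since $\lambda\in w\Lambda v$ gives $s_\lambda^* s_w s_\lambda = s_v$ and $s_w s_\lambda s_\lambda^* = s_\lambda s_\lambda^*\leq s_w$, so $s_\lambda s_\lambda^*\in I$ and $s_w\geq s_\lambda s_\lambda^*$. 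The difficulty is upgrading reachability to all vertices using $(2)$. Here I would use $(2)$ applied to a carefully chosen $x\in w(\partial\Lambda)$ (which exists and is nonzero by Remark~\ref{0rep}): condition~$(2)$ gives some $n\leq d(x)$ with $v\Lambda x(n)\neq\emptyset$, so there is a path $\mu\in v\Lambda x(n)$ and the segment $x(0,n)\in w\Lambda x(n)$ connects $w$ to $x(n)$. The delicate part will be organising the finite-exhaustive and $\MCE$ bookkeeping so that knowing $s_{x(n)}\in I$ (which follows because $x(n)$ is reachable from $v$ via $\mu$) forces $s_w\in I$; this is where I expect to invoke (CK4) together with Lemma~\ref{2.7}(iii) to dominate $s_w$ by a sum of projections $s_\lambda s_\lambda^*$ already in $I$, exactly as in \cite[Proposition~4.3]{shotwell}.

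For $(3)\Rightarrow(4)$ I would proceed by contraposition: if $I$ is a proper ideal meeting $C^*(\Lambda)^\gamma$ nontrivially, apply the gauge-invariant conditional expectation $\Phi:C^*(\Lambda)\to C^*(\Lambda)^\gamma$ given by averaging the gauge action, $\Phi(b)=\int_{\T}\gamma_z(b)\,dz$. A standard positivity argument shows that a nonzero positive element of $I\cap C^*(\Lambda)^\gamma$ has a spectral projection dominating some $s_\mu s_\mu^*$, and then $s_{s(\mu)}=s_\mu^* s_\mu\in I$, contradicting $(3)$; so any ideal meeting the fixed-point algebra nontrivially must contain a vertex projection and hence, by $(3)$, be all of $C^*(\Lambda)$. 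Finally $(4)\Rightarrow(1)$ is again best argued by contraposition: if $\Lambda$ fails to be cofinal, there exist $v,w$ such that no $E\in w\FE(\Lambda)$ has all its sources reached from $v$; I would use this to build a proper gauge-invariant ideal — the ideal generated by $\{s_u : u\in\Lambda^0,\ v\Lambda u=\emptyset\}$ or a suitable hereditary-saturated analogue — whose intersection with $C^*(\Lambda)^\gamma$ is nontrivial (it contains the relevant vertex projections) yet which is proper because it omits $s_v$. Verifying properness and gauge-invariance of this ideal is the subtle point of this last implication, and I would lean on the finite-path cofinality failure to guarantee that $s_v$ genuinely lies outside the constructed ideal.
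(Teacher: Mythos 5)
Your cycle $(1)\Rightarrow(2)\Rightarrow(3)\Rightarrow(4)\Rightarrow(1)$ would suffice in principle, and two of its legs are sound: your $(1)\Rightarrow(2)$ is exactly the paper's argument, and your $(3)\Rightarrow(4)$ is correct in substance (though the conditional expectation is unnecessary --- you are handed a nonzero element of $C^*(\Lambda)^\gamma$, and the paper simply uses that $C^*(\Lambda)^\gamma$ is the closure of the increasing union of the finite-dimensional algebras $M^s_{\Pi E}$, so a nonzero ideal meets some $M^s_{\Pi E}$, hence contains a full matrix summand, hence some $s_\lambda s_\lambda^*$). The genuine gap is in $(2)\Rightarrow(3)$, and it is not mere bookkeeping. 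Relation (CK4) can force $s_w\in I$ only once you have an \emph{entire finite exhaustive set} $E\in w\FE(\Lambda)$ with $s_{s(\alpha)}\in I$ for every $\alpha\in E$; applying $(2)$ to one arbitrary $x\in w(\partial\Lambda)$ gives a single $n$ with $s_{x(n)}\in I$, and hence only the single subprojection $s_{x(0,n)}s_{x(0,n)}^*\in I$, which does not dominate $s_w$. (Your preliminary claim that $\lambda\in w\Lambda v$ and $s_v\in I$ force $s_w\in I$ is false for the same reason; in the example below there is a proper ideal containing $s_v$ but not $s_w$ even though $w\Lambda v\neq\emptyset$.) The paper closes this gap by setting $H=\{u : s_u\in I\}$, verifying that $\Lambda^0\setminus H$ satisfies the hypotheses of Lemma~\ref{E} --- hypothesis (1) being exactly the (CK4) saturation step, proved via the $\Pi F$ expansion of \cite[Proposition~3.5]{rsy2} --- and then, were $\Lambda^0\setminus H$ nonempty, invoking Lemma~\ref{E} to manufacture $x\in\partial\Lambda$ \emph{all} of whose vertices avoid $H$; applying $(2)$ to that particular $x$ yields the contradiction. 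Lemma~\ref{E} (a diagonal induction threading a path of $\partial\Lambda$ through a prescribed vertex set) is the technical heart of the proof, and nothing in your sketch plays its role.

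Your $(4)\Rightarrow(1)$ would fail outright. Consider the $1$-graph with vertices $v,w,u_1,u_2,\dots$, a single edge $f$ with $r(f)=w$, $s(f)=v$, and edges $e_i$ with $r(e_i)=w$, $s(e_i)=u_i$ for all $i\in\N$. The $e_i$ are infinitely many pairwise incompatible edges, so every $E\in w\FE(\Lambda)$ must contain the vertex $w$ itself; since $v\Lambda w=\emptyset$, $\Lambda$ is not cofinal. However $\{u\in\Lambda^0 : v\Lambda u=\emptyset\}=\{w,u_1,u_2,\dots\}$, and the ideal this set generates contains $s_f=s_ws_f$, hence $s_v=s_f^*s_f$, hence every generator of $C^*(\Lambda)$: your candidate ideal is all of $C^*(\Lambda)$, and no hereditary-saturated enlargement of that vertex set can rescue it. The proper ideal witnessing non-simplicity here has a different shape --- it contains $s_v$ and $s_f$ but \emph{not} $s_w$. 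This is why the paper instead proves $\neg(1)\Rightarrow\neg(2)$ (again via Lemma~\ref{E}, applied to $K=\{u\in\Lambda^0 : \text{every } E\in u\FE(\Lambda)\text{ contains some }\alpha\text{ with }v\Lambda s(\alpha)=\emptyset\}$) to obtain $x\in\partial\Lambda$ with $v\Lambda x(n)=\emptyset$ for all $n\leq d(x)$, and then proves $\neg(2)\Rightarrow\neg(3)$ by exhibiting a concrete proper ideal: the kernel of the representation of $C^*(\Lambda)$ on $\clsp\{\zeta_y : y\in\partial\Lambda,\ \sigma^m(y)=\sigma^n(x)\text{ for some }m,n\in\N^k\}$, which contains $s_v$ but not $s_{r(x)}$. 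Without Lemma~\ref{E} and this representation-theoretic construction, the route from $(4)$ (or $(3)$) back to $(1)$ does not close.
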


We begin with a technical lemma. Conditions (1) and (2) of the
following Lemma are precisely conditions (MT1) and (MT2) of
\cite[Proposition~5.5.3]{ASPhD}.

\begin{lemma}\label{E}
Let $K \subseteq \Lambda^0$ be a nonempty set such that
\begin{enumerate}
\item If $u \in K$ and $E \in u \FE(\Lambda)$ then there
    exists $\alpha \in E$ such that $s(\alpha) \in K$; and
\item If $u \in K$ and $v \Lambda u \neq \emptyset$ then $v
    \in K$.
\end{enumerate}
Then there exists $x \in \partial \Lambda$ such that $x(n) \in
K$ for every $n \leq d(x)$.
\end{lemma}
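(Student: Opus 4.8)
The plan is to build $x$ as an increasing union of finite paths $\nu_0 \le \nu_1 \le \cdots$, each extending the previous one on the right (so that $\nu_l = \nu_{l+1}\bigl(0, d(\nu_l)\bigr)$) and each satisfying $s(\nu_l) \in K$. I start with any vertex $\nu_0 = v_0 \in K$, which exists since $K$ is nonempty. At each stage I extend the path so as to discharge one \emph{obligation}, by which I mean a pair $(n, E)$ with $n \le d(\nu_l)$ and $E \in x(n)\FE(\Lambda)$; here $x(n) := r\bigl(\nu_l(n, d(\nu_l))\bigr)$ is the vertex reached at position $n$, which is left unchanged by later extensions. As $\Lambda$ is countable, the family of obligations visible at any stage is countable, and a dovetailing over the obligations that appear as the degree grows will ensure every obligation is eventually discharged.

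The crucial step is discharging an obligation $(n, E)$ at a position $n$ below the current endpoint, since extending the path forward from $s(\nu_l)$ does not obviously control what happens back at $x(n)$. Here I would use the $\Ext$ construction. Put $\lambda := \nu_l(n, d(\nu_l))$, so $r(\lambda) = x(n)$ and $s(\lambda) = s(\nu_l)$. By Lemma~\ref{c5} we have $\Ext(\lambda, E) \in s(\nu_l)\FE(\Lambda)$, and since $s(\nu_l) \in K$, hypothesis~(1) of the Lemma supplies $\beta \in \Ext(\lambda, E)$ with $s(\beta) \in K$. By the definition of $\Ext$ there are $\mu \in E$ and $\eta \in \MCE(\lambda, \mu)$ with $\beta = \eta\bigl(d(\lambda), d(\eta)\bigr)$, so that $\lambda \beta = \eta$. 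Setting $\nu_{l+1} := \nu_l \beta$ then gives $\nu_{l+1}(n, d(\nu_{l+1})) = \lambda\beta = \eta$ and hence $\nu_{l+1}(n, n + d(\mu)) = \eta\bigl(0, d(\mu)\bigr) = \mu \in E$; thus the obligation is met with $p = d(\mu)$, while $s(\nu_{l+1}) = s(\beta) \in K$.

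Passing to the limit, the chain $(\nu_l)$ has nondecreasing degrees that patch together, via the factorisation property, into a graph morphism $x : \Omega_{k,m} \to \Lambda$ with $m = \bigvee_l d(\nu_l)$. For any $n \le m$, monotonicity gives some $l$ with $n \le d(\nu_l)$, so $x(n)$ is defined and $\nu_l(n, d(\nu_l)) \in x(n)\Lambda s(\nu_l)$ witnesses $x(n)\Lambda s(\nu_l) \neq \emptyset$; as $s(\nu_l) \in K$, hypothesis~(2) forces $x(n) \in K$. This is exactly where hypothesis~(2) is indispensable, since the construction controls only the endpoints directly. Finally, every obligation $(n, E)$ with $n \le d(x) = m$ has $n \le d(\nu_l)$ for some $l$, hence becomes visible and is eventually discharged, producing $p \le d(x) - n$ with $x(n, n+p) \in E$; so $x \in \partial\Lambda$ by Definition~\ref{bdp}, which completes the argument.

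\textbf{The main obstacle} is the bookkeeping. The set of obligations is not fixed in advance but grows as the path lengthens, and for each newly reached vertex $x(n)$ the set $x(n)\FE(\Lambda)$ may itself be infinite, so one must dovetail over a countable, dynamically expanding index set and check both that each obligation is assigned a finite stage and that discharged obligations are never undone by later extensions --- the latter being automatic because each $\nu_l$ is a prefix of every subsequent path. A secondary subtlety is that $m$ is not prescribed but emerges as $\bigvee_l d(\nu_l)$; there is no incompatibility, because each discharge only enlarges the degree to some finite $n + d(\mu) \le m$, so every obligation is satisfied within the range of $x$.
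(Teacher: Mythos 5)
Your proposal is correct and follows essentially the same route as the paper's proof: an increasing chain of finite paths with sources kept in $K$, extensions produced by applying Lemma~\ref{c5} (the $\Ext$ construction) together with hypothesis~(1) at the current endpoint, a countable dovetailing to discharge every pair $(n,E)$, and hypothesis~(2) invoked at the end to pull membership in $K$ back to every vertex $x(n)$. The only difference is organizational --- the paper makes the bookkeeping explicit via a diagonal position function on pairs (stage, index into an enumeration of $s(\lambda_l)\FE(\Lambda)$) and performs the $\Ext$-reduction of interior obligations at verification time rather than at construction time --- so the two arguments are in substance the same.
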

\begin{proof}

We draw heavily on the techniques used in the proof of
\cite[Lemma~4.7]{sims1}. Define $P : (\N \setminus \{ 0 \})^2
\rightarrow \N \setminus \{ 0 \}$ by
\[
P(m, n) := \frac{(m + n - 1)(m + n - 2)}{2} + m.
\]
Then $P$ is the position function for the diagonal listing
\[
\{ (1, 1), (1, 2), (2, 1), (1, 3), (2, 2), (3, 1), (1, 4)... \} ~ \mathrm{of} ~ (\N \setminus \{ 0 \})^2.
\]
That is, if $P(m, n) = l$ then $(m, n)$ is the $l$th term in
the above listing. For each $l \in \N \setminus \{ 0 \}$ let
$(i_l, j_l) \in (\N \setminus \{ 0 \})^2$ be the unique pair
such that $P(i_l, j_l) = l$.

Fix $v \in K$. We claim there exists a sequence $(\lambda_l)_{l
= 1}^\infty \subset v \Lambda$ and for each $l$ an enumeration
$s(\lambda_l)\FE(\Lambda) = \{ E_{l, j} : j \geq 1 \}$ which
satisfy
\begin{enumerate}
\item[(i)] $\lambda_{l + 1} \big( 0, d(\lambda_l) \big) =
    \lambda_l$ for every $l \geq 1$ and,
\item[(ii)] $\lambda_{l + 1} \big( d(\lambda_{i_l}),
    d(\lambda_{l + 1}) \big) \in E_{i_l, j_l} \Lambda K$
    for every $l \geq 1$.
\end{enumerate}

We proceed by induction on $l$. For $l = 0$ define $\lambda_{l
+ 1} = \lambda_1 = v$. Since $\Lambda$ is countable, for each
$w \in \Lambda^0$ the collection of finite subsets of $w
\Lambda$ is countable. In particular, $w \FE(\Lambda)$ is
countable. Let $\{ E_{1, j} : j \in \N \setminus \{ 0 \} \}$ be
a listing of $v \FE(\Lambda)$. Then (i) and (ii) are trivially
satisfied because $l = 0 < 1$.

Now suppose $l \geq 1$ and that $\lambda_n$ and $\{ E_{n, j} :
j \geq 1 \}$ satisfy (i) and (ii) for $1 \leq n \leq l$. Recall
that $P (i_l, j_l) = l$ so $i_l$ is the horizontal co-ordinate
of the $l$th term in the diagonal listing. In particular $i_l <
l$ and by assumption the listing $\{ E_{i_l, j} : j \geq 1 \}$
of $s(\lambda{i_l}) \FE(\Lambda)$ satisfies (i) and (ii). In
particular the set $E_{i_l, j_l}$ has already been fixed. We
now must find $\lambda_{l + 1}$ satisfying (i) and (ii).

Let $\mu := \lambda_l \big( d(\lambda{i_l}), d(\lambda_l)
\big)$ and $E := \Ext(\mu, E_{i_l, j_l})$. Lemma~\ref{c5}
implies that $E \in s(\mu)\FE(\Lambda)$. Condition (ii) implies
that $s(\mu) \in K$, so hypothesis (1) implies that there
exists $\alpha \in E$ such that $s(\alpha) \in K$. By
definition of $E$ there exist $\nu \in E_{i_l, j_l}$ and $\beta
\in \Lambda$ such that $\mu \alpha = \nu \beta \in \MCE(\mu,
\nu)$. Then $\lambda_{l + 1} := \lambda_l \alpha$ satisfies
(ii) and trivially satisfies (i). This proves the claim.

Define $m \in (\N \cup \{ \infty \})^k$ by $m_i = \sup\{
d(\lambda_l)_i : l \geq 1 \}$, and define $x : \Omega_{k, m}
\rightarrow \Lambda$ by $x \big( 0, d(\lambda_l) \big) =
\lambda_l$ for all $l$. Once we show that $x \in v(\partial
\Lambda)$, hypothesis (2) will force $x(n) \in K$ for each $n
\leq d(x)$ as required.

Fix $n \leq d(x)$ and $F \in x(n) \FE(\Lambda)$. Fix $l$ such
that $d(\lambda_{i_l}) \geq n$ and let $E := \Ext \big( x(n,
d(\lambda_{i_l})), F \big)$. Then Lemma~\ref{c5} implies that
$E \in s(\lambda_{i_l}) \FE(\Lambda)$. By definition of the
position function $P$, there exists $k \geq l$ such that $i_k =
l$ and hence $E = E_{l, j_k}$. Then condition (ii) implies
\[
x \big( d(\lambda_l, d(\lambda_{k + 1}) \big) = x \big( d(\lambda_{i_k}, d(\lambda_{k + 1}) \big) \in E.
\]
On the other hand, since $E = \Ext \big( x(n,
d(\lambda_{i_l})), F \big)$ the definition of $\Ext$ on
page~\pageref{ext} implies that there exists $\mu \in F$ and
$\alpha \in \Lambda$ such that
\[
x \big( n, d(\lambda_l) \big) x \big( d(\lambda_l), d(\lambda_{k + 1}) \big) = \mu \alpha \in \MCE \big( x(n, d(\lambda_l)), \mu \big)
\]
In particular
\[
x \big( n, n + d(\mu) \big) = \Big( x \big( n, d(\lambda_{k + 1}) \big) \Big) \big( 0, d(\mu) \big) = (\mu \alpha) \big( 0, d(\mu) \big) = \mu.
\]
Hence $x \in \partial \Lambda$ as required.
\end{proof}

\begin{proof}[Proof of Theorem~\ref{ideal}]
$(1) \Rightarrow (2)$: Suppose $\Lambda$ is cofinal. Fix $v \in
\Lambda^0$ and $x \in \partial \Lambda$. Since $\Lambda$ is
cofinal there exists $E \in x(0) \FE(\Lambda)$ such that $v
\Lambda s(\alpha) \neq \emptyset$ for all $\alpha \in E$. Since
$x \in \partial \Lambda$ there exists $\alpha \in E$ such that
$x \big( 0, d(\alpha) \big) = \alpha$. In particular $n =
d(\alpha)$ satisfies
\[
x(n)  = s \big( x \big( 0, d(\alpha) \big) \big) = s(\alpha)
\]
so $v \Lambda x(n) \neq \emptyset$.

$(2) \Rightarrow (1)$: We prove the contrapositive statement.
Suppose there exist $v, w \in \Lambda^0$ such that for every $E
\in w \FE(\Lambda)$ there exists $\alpha \in E$ such that $v
\Lambda s(\alpha) = \emptyset$. We aim to apply Lemma~\ref{E}.
To this end we let
\[
K := \{ u \in \Lambda^0 : \text{ for each }E \in u\FE(\Lambda) \text{ there exists }\alpha \in E \text{ such that } v \Lambda s(\alpha) = \emptyset \}.
\]
We claim that $K$ satisfies the hypothesis of Lemma~\ref{E}. We
have $K \neq \emptyset$ since $w \in K$. Fix $u \in K$ and $E
\in u\FE(\Lambda)$. To show that $K$ satisfies hypothesis (1)
of Lemma~\ref{E} we claim there exists $\alpha \in E$ such that
$s(\alpha) \in K$. Indeed, if for every $\alpha \in E$ we have
$s(\alpha) \notin K$ then for every $\alpha \in E$ there exists
$F_\alpha \in s(\alpha)\FE(\Lambda)$ such that $v \Lambda
s(\eta) \neq \emptyset$ for every $\eta \in F_\alpha$. Let $G
:= \{ \alpha \eta : \alpha \in E, ~ \eta \in F_\alpha \}$ and
$F := \{ \alpha \in E : F_\alpha ~ \mathrm{does ~ not ~
contain} ~ s(\alpha) \}$. Then since $F_\alpha \in
\FE(\Lambda)$ for all $\alpha \in E$
\cite[Definition~5.2]{sims1}, combined with
\cite[Lemma~5.3]{sims1} implies $G \in \FE(\Lambda)$. By
construction of $G$ we have $v \Lambda s(\lambda) \neq
\emptyset$ for each $\lambda \in G$ and since $G \in
u\FE(\Lambda)$ this contradicts $u \in K$. Hence $K$ satisfies
hypothesis (1) of Lemma~\ref{E}.

For hypothesis (2) fix $u \in K$ and suppose $v \Lambda u \neq
\emptyset$, say $\lambda \in v \Lambda u$. We must show that $v
\in K$. Fix $E \in v\FE(\Lambda)$ and let $F = \Ext(\lambda,
E)$. It follows from Lemma~\ref{c5} that $F \in u\FE(\Lambda)$.
Since $u \in K$ there exists $\alpha \in F$ such that $v
\Lambda s(\alpha) = \emptyset$. By definition of $F$ there
exists $\mu \in E$ and $\nu \in \MCE(\lambda, \mu)$ such that
$\alpha = \nu \big( d(\lambda), d(\lambda) \vee d(\mu) \big)$.
In particular
\[
\beta := \nu \big( d(\mu), d(\lambda) \vee d(\mu) \big)
    \in s(\mu) \Lambda s(\alpha)
\]
and since $v \Lambda s(\alpha) = \emptyset$ we have $v \Lambda
s(\mu) = \emptyset$. That is, $\mu \in E$ satisfies $v \Lambda
s(\mu) = \emptyset$. Hence $v \in K$ and thus $K$ satisfies the
hypotheses of Lemma~\ref{E} as claimed. By Lemma~\ref{E} there
exists $x \in \partial \Lambda$ with $x(n) \in K$ for every $n
\leq d(x)$. Since $\{ x(n) \} \in x(n) \FE(\Lambda)$ for all $n
\leq d(x)$, the definition of $K$ then implies that $v \Lambda
x(n) = \emptyset$ for all $n$.

$(2) \Rightarrow (3)$: Fix an ideal $I$ of $C^*(\Lambda)$ with
$s_w \in I$ for some $w \in \Lambda^0$. Let $H = \{ w : s_w \in
I \}$. We will show $\Lambda^0 \setminus H$ satisfies the
hypothesis of Lemma~\ref{E} and then deduce that $H =
\Lambda^0$.

Recall from \cite[Definition~3.3]{rsy2} that $\Pi E$ was
defined as the smallest set containing $E$ such that
\[
\parbox{0.8\textwidth}{for all $\lambda,\mu,\nu,\rho \in
    \Pi E$ with $d(\lambda) = d(\mu)$, $d(\nu) = d(\rho)$,
    $s(\lambda) = s(\mu)$ and $s(\nu) = s(\rho)$ and each
    $\mu\alpha = \nu\beta \in \MCE(\mu,\nu)$, we have
    $\lambda\alpha, \rho\beta \in \Pi E$.}
\]
In particular, the construction of the set $F$ in the proof of
\cite[Lemma~3.2]{rsy2} combines with
\cite[Definition~3.3]{rsy2} to show that each element $\mu$ of
$\Pi E$ has the form $\mu = \nu \nu'$ for some $\nu \in E$.

We claim that if $v \in \Lambda^0 \setminus H$ then for every
finite exhaustive set $F \subset v \Lambda$ there exists
$\lambda \in F$ such that $s(\lambda) \in \Lambda^0 \setminus
H$. Suppose for contradiction that $s(\nu) \in H$ for each $\nu
\in F$. Fix $\mu \in \Pi F$ then $\mu = \nu \nu'$ for some $\nu
\in F$. Since $s_{s(\nu)} \in I$ we have $s_{s(\mu)} =
s_{\nu'}^* s_{s(\nu)} s_{\nu'} \in I$ for every $\mu \in \Pi
F$. Then $s_\mu s_\mu^* = s_\mu s_{s(\mu)} s_\mu^* \in I$.
Proposition~3.5 of \cite{rsy2} implies that
\[
s_v = \prod_{\mu' \in v \Pi F}(s_v - s_{\mu'} s_{\mu'}^*) +
    \sum_{\mu \in v \Pi F} \big(
        s_\mu s_\mu^* \prod_{\nu \nu' \in \Pi F, d(\nu') > 0}
            (s_\nu s_\nu^* - s_{\nu \nu'} s_{\nu \nu'}^*)
    \big).
\]
Since $F$ is exhaustive, $\Pi F$ is also exhaustive and
\cite[Lemma~3.2]{rsy2} implies that $\Pi F$ is finite.
Hence~(CK4) implies that $\prod_{\mu' \in v \Pi F}(s_v -
s_{\mu'} s_{\mu'}^*) = 0$ and thus
\[
s_v = \sum_{\mu \in v \Pi F}
    \big(
    s_\mu s_\mu^* \prod_{\nu \nu' \in \Pi F, d(\nu') > 0}
        (s_\nu s_\nu^* - s_{\nu \nu'} s_{\nu \nu'}^*)
    \big)
    \in I.
\]
This contradicts $v \in \Lambda^0 \setminus H$. Thus $\Lambda^0
\setminus H$ satisfies hypothesis (1) of Lemma~\ref{E}. For
hypothesis (2), suppose $u \in \Lambda^0 \setminus H$ and
$\lambda \in v \Lambda u$. Suppose for contradiction that $v
\in H$. Then $s_u = s_{s(\lambda)} = s_\lambda^* s_v s_\lambda
\in H$ contradicting the definition of $u$. So $\Lambda^0
\setminus H$ satisfies the hypothesis of Lemma~\ref{E}.

By Lemma~\ref{E} there exists $x \in \partial \Lambda$ with
$x(n) \in \Lambda^0 \setminus H$ for every $n \leq d(x)$. By
hypothesis there exists $n \leq d(x)$ such that $w \Lambda x(n)
\neq \emptyset$. Let $\mu \in w \Lambda x(n)$. Since $s_w \in
I$ we have $s_{x(n)} = s_\mu^* s_w s_\mu \in I$ forcing $x(n)
\in H$. This contradicts the definition of $x$. Hence $H =
\Lambda^0$. For each $\mu \in \Lambda$ we now have $s(\mu) \in
H$, so $s_{s(\mu)} \in I$. Thus $s_\mu = s_\mu s_\mu^* s_\mu =
s_\mu s_{s(\mu)} \in I$. So $I$ contains all the generators of
$C^*(\Lambda)$, forcing $I = C^*(\Lambda)$.

$(3) \Rightarrow (2)$: We prove the contrapositive statement.
Suppose statement (2) does not hold. Fix $x \in \partial
\Lambda$ and $v \in \Lambda^0$ such that $v \Lambda x(n) =
\emptyset$ for all $n \in \mathbb{N}^k$. Define $T \subseteq
\partial \Lambda$ by
\[
T = \{ y \in \partial \Lambda : \sigma^m (y) = \sigma^n (x) ~ \mathrm{for ~ some} ~ m,n \in \mathbb{N}^k \}.
\]

Let $W = \clsp \{ \zeta_y : y \in T \} \subseteq \ell^2
(\partial \Lambda)$ and fix $y \in T$. Since $\lambda y,
\sigma^n (y) \in T$ for all $\lambda \in \Lambda r(y)$ and $n
\leq d(y)$, we have $S_\lambda W \subseteq W$ and $S_\lambda^*
W \subseteq W$ for every $\lambda \in \Lambda$. Since the
$S_\lambda$ satisfy the Cuntz-Krieger relations, so do the
$S_\lambda |_W$.

By the universal property of $C^*(\Lambda)$ there is a
representation $\pi : C^*(\Lambda) \rightarrow \mathcal{B}(W)$
such that $\pi (s_\lambda) = S_\lambda$ for every $\lambda \in
\Lambda$. We claim that $I := \ker(\pi)$ contains a vertex
projection but is not equal to $C^*(\Lambda)$. First observe
that $\pi(s_{r(x)}) \zeta_x = S_{r(x)} \zeta_x = \zeta_x$, so
$I \neq C^*(\Lambda)$. We will show that $s_v \in I$. To see
this, fix a basis element $\zeta_y \in W$. Since $y \in T$ we
have $\sigma^m (y) = \sigma^n (x)$ for some $m, n \in \N^k$. In
particular $y(0, m) \in r(y) \Lambda x(n)$ but $v \Lambda x(n)
= \emptyset$ which forces $r(y) \neq v$. Hence $\pi(s_v)
\zeta_y = 0$.

$(3) \Rightarrow (4)$: For $E \subseteq \Lambda$ define $M_{\Pi
E}^s := \clsp \{ s_\mu s_\nu^* : \mu, \nu \in \Pi E, d(\mu) =
d(\nu) \}$. Then the proof of \cite[Theorem~3.1]{rsy2} shows
that $C^*(\Lambda)^\gamma = \overline{\bigcup_{E \subset
\Lambda ~ \mathrm{finite}} M_{\Pi E}^s}$, and
\cite[Lemma~3.2]{rsy2} implies that each $M^s_{\Pi E}$ is a
finite-dimensional $C^*$-algebra. Let $I$ be an ideal of
$C^*(\Lambda)$ such that $I \cap C^*(\Lambda)^\gamma \neq \{ 0
\}$. Then there exists a finite set $E \subseteq \Lambda$ such
that $I \cap M_{\Pi E}^s \neq \{ 0 \}$. Since $M_{\Pi E}^s =
\oplus_{v \in s(\Pi E)} M_{(\Pi E)v}^s$ and $M_{(\Pi E)v}^s$ is
simple for each $v \in s(\Pi E)$, there exists $v \in s(\Pi E)$
such that $I \cap M_{(\Pi E)v}^s = M_{(\Pi E)v}^s$. Since $(\Pi
E)v \neq \emptyset$ there exists $\lambda$ such that $s_\lambda
s_\lambda^* \in M_{(\Pi E)v}^s \subset I$. Hence
$s_{s(\lambda)} = s_\lambda^* (s_\lambda s_\lambda^*) s_\lambda
\in I$ and $(3)$ implies $I = C^*(\Lambda)$.

$(4) \Rightarrow (3)$: Trivial, since each $s_v$ belongs to
$C^*(\Lambda)^\gamma$.
\end{proof}

\begin{proof}[Proof of Theorem~\ref{main}]
($\Rightarrow$). Suppose $C^*(\Lambda)$ is simple. Then $(3)
\Rightarrow (1)$ of Theorem~\ref{ideal} implies that $\Lambda$
is cofinal and $(2) \Rightarrow (1)$ of Theorem~\ref{4.3rob-s2}
implies that $\Lambda$ is aperiodic.

($\Leftarrow$). Suppose $\Lambda$ is aperiodic and cofinal. Fix
a nonzero ideal $I$ of $C^*(\Lambda)$. Then $(1) \Rightarrow
(2)$ of Theorem~\ref{4.3rob-s2} implies that $I$ contains a
vertex projection, and $(1) \Rightarrow (3)$ of
Theorem~\ref{ideal} then implies $I = C^*(\Lambda)$. Thus
$C^*(\Lambda)$ is simple.
\end{proof}

\appendix
\section{Cofinality}\label{lg}

In the appendix we will show that our cofinality condition for
finitely aligned $k$-graphs is equivalent to other, simpler
conditions for less general classes of $k$-graphs.

\subsection{Row-finite locally convex $k$-graphs}
We begin with some notation: for $n \in \N^k$ we write $|n| :=
\sum_{i = 1}^k n_i \in \N$. To show the ``if'' direction for
row-finite locally convex $k$-graphs we will need to use the
following technical lemma.

\begin{lemma}\label{F}
Let $\Lambda$ be a locally convex $k$-graph and fix $\lambda\in
\Lambda^{\leq m + n}$. Then $\mu = \lambda \big( 0, m \wedge
d(\lambda) \big)$ and $\nu = \lambda \big( m \wedge d(\lambda),
d(\lambda) \big)$ are the unique paths $\mu \in \Lambda^{\leq
m}$ and $\nu \in  \Lambda^{\leq n}$ such that $\lambda = \mu
\nu$.
\end{lemma}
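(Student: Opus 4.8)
The plan is to treat existence and uniqueness of the factorisation separately, isolating first the one place where local convexity is actually used.

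\emph{A backward-propagation lemma.} First I would record the following consequence of local convexity: if $\nu \in \Lambda$ satisfies $d(\nu)_i = 0$ and $s(\nu)\Lambda^{e_i} = \emptyset$, then $r(\nu)\Lambda^{e_i} = \emptyset$. I would prove this by induction on $|d(\nu)|$. When $d(\nu) = 0$ we have $r(\nu) = s(\nu)$ and there is nothing to prove. Otherwise pick $j \neq i$ with $d(\nu)_j > 0$ and factor $\nu = f\nu'$ with $d(f) = e_j$; since $d(\nu')_i = 0$ and $s(\nu') = s(\nu)$, the inductive hypothesis gives $s(f)\Lambda^{e_i} = \emptyset$, and then local convexity at $r(f) = r(\nu)$ (in the form that $r(f)\Lambda^{e_i} \neq \emptyset$ together with $f \in r(f)\Lambda^{e_j}$ would force $s(f)\Lambda^{e_i} \neq \emptyset$) rules out $r(\nu)\Lambda^{e_i} \neq \emptyset$.

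\emph{Existence.} Set $p := m \wedge d(\lambda)$ and take $\mu := \lambda(0, p)$ and $\nu := \lambda(p, d(\lambda))$, so that $\lambda = \mu\nu$ by the factorisation property. Plainly $d(\mu) = p \leq m$, and a coordinatewise check using $d(\lambda) \leq m + n$ gives $d(\nu) = d(\lambda) - p \leq n$. For the exhaustiveness clause of $\nu \in \Lambda^{\leq n}$: if $d(\nu)_i < n_i$ then a short computation shows $d(\lambda)_i < (m+n)_i$, whence $s(\lambda)\Lambda^{e_i} = \emptyset$ because $\lambda \in \Lambda^{\leq m+n}$, and $s(\nu) = s(\lambda)$ finishes it. For $\mu \in \Lambda^{\leq m}$: if $d(\mu)_i < m_i$ then $d(\lambda)_i < m_i$, so $p_i = d(\lambda)_i$, $d(\nu)_i = 0$, and again $d(\lambda)_i < (m+n)_i$ gives $s(\nu)\Lambda^{e_i} = s(\lambda)\Lambda^{e_i} = \emptyset$; the backward-propagation lemma then transports this emptiness along $\nu$ to yield $s(\mu)\Lambda^{e_i} = r(\nu)\Lambda^{e_i} = \emptyset$.

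\emph{Uniqueness.} Suppose $\lambda = \mu'\nu'$ with $\mu' \in \Lambda^{\leq m}$ and $\nu' \in \Lambda^{\leq n}$. Then $d(\mu') \leq d(\lambda)$ and $d(\mu') \leq m$, so $d(\mu') \leq p$. For the reverse inequality I would argue coordinatewise: if $d(\mu')_i < p_i$ for some $i$, then $d(\mu')_i < m_i$, so $\mu' \in \Lambda^{\leq m}$ forces $s(\mu')\Lambda^{e_i} = \emptyset$; but $d(\mu')_i < p_i \leq d(\lambda)_i$ gives $d(\nu')_i \geq 1$, so $\nu'(0, e_i)$ is an element of $r(\nu')\Lambda^{e_i} = s(\mu')\Lambda^{e_i}$, a contradiction. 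Hence $d(\mu') = p$, and the factorisation property identifies $\mu' = \lambda(0,p) = \mu$ and $\nu' = \nu$.

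\emph{Main obstacle.} The only genuinely nontrivial ingredient is the backward-propagation lemma of the first paragraph; everything else is bookkeeping with coordinatewise minima and the definition of $\Lambda^{\leq n}$. It is worth noting that uniqueness uses only $\mu' \in \Lambda^{\leq m}$ and makes no appeal to local convexity, whereas existence invokes local convexity solely to verify the exhaustiveness clause for $\mu \in \Lambda^{\leq m}$.
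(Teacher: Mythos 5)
Your proof is correct, but the existence half takes a genuinely different route from the paper's. The paper proves existence by induction on $|n|$: the base case $|n| = 1$ is quoted directly as Lemma~3.12 of \cite{rsy1}, and the inductive step splits off one coordinate direction at a time and reassembles using Lemma~3.6 of \cite{rsy1}; the explicit formulas $\mu = \lambda\big(0, m \wedge d(\lambda)\big)$ and $\nu = \lambda\big(m \wedge d(\lambda), d(\lambda)\big)$ are then only identified \emph{a posteriori} through the uniqueness argument. You instead verify directly that these explicitly defined paths lie in $\Lambda^{\leq m}$ and $\Lambda^{\leq n}$, with all the graph-theoretic work concentrated in your backward-propagation lemma (if $d(\nu)_i = 0$ and $s(\nu)\Lambda^{e_i} = \emptyset$ then $r(\nu)\Lambda^{e_i} = \emptyset$), which you prove by an induction invoking only the raw definition of local convexity. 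What your approach buys is self-containedness and a sharp localisation of where local convexity enters (solely in the exhaustiveness clause for $\mu \in \Lambda^{\leq m}$); what the paper's approach buys is brevity, by outsourcing the combinatorics to the established results of \cite{rsy1}. Your uniqueness argument --- that any $\mu' \in \Lambda^{\leq m}$ occurring as an initial factor of $\lambda$ must have degree exactly $m \wedge d(\lambda)$, since a strictly smaller $i$th coordinate would exhibit an element of $s(\mu')\Lambda^{e_i}$ --- is essentially identical to the paper's, and you correctly note that it uses no local convexity.
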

\begin{proof}
We first establish the existence of paths $\mu \in
\Lambda^{\leq m}$ and $\nu \in \Lambda^{\leq n}$ such that
$\lambda = \mu \nu$ and then show that they must be as defined
above. We proceed by induction on $|n|$. If $|n|= 1$ then this
is precisely \cite[Lemma~3.12]{rsy1}. Suppose the statement is
true for $|n| = l \geq 1$ and suppose $|n| = l + 1$. Fix $j \in
\{ 1,\ldots,k \}$ such that $n_j \geq 1$. By
\cite[Lemma~3.12]{rsy1} we may factorise $\lambda$ as $\lambda'
\lambda''$ where $\lambda' \in \Lambda^{(m + n) - e_j}$ and
$\lambda'' \in \Lambda^{\leq e_j}$. The inductive hypothesis
implies that $\lambda' = \mu \lambda'''$ where $\mu \in
\Lambda^{\leq m}$ and $\lambda''' \in \Lambda^{\leq n - e_j}$.
By \cite[Lemma~3.6]{rsy1} we have $\nu := \lambda''' \lambda''
\in \Lambda^{\leq (n - e_j) + e_j} = \Lambda^{\leq n}$ and
$\lambda = \mu \nu$ as desired.

For uniqueness first observe that if $p \leq d(\lambda)$
satisfies $\lambda(0, p) \in \Lambda^{\leq m}$, then $p \leq m
\wedge d(\lambda)$. Now suppose for contradiction that $p_i <
\big( m \wedge d(\lambda) \big)_i$. Then $p_i < m_i$ so $p +
e_i \leq d(\lambda)$; hence $\lambda(p, p + e_i) \in \lambda(p)
\Lambda^{e_i}$ contradicting $\lambda(0, p) \in \Lambda^{\leq
m}$. It follows that if $\lambda = \mu \nu$ with $\mu \in
\Lambda^{\leq m}$, we must have $\mu = \lambda \big( 0, m
\wedge d(\lambda) \big)$, and then $\nu = \lambda \big( d(\mu),
d(\lambda) \big)$ by the factorisation property.
\end{proof}

\begin{prop}
Suppose $\Lambda$ is a row-finite locally convex $k$-graph.
Then $\Lambda$ is cofinal if and only if for every $v, w \in
\Lambda^0$ there exists $n \in \N^k$ such that $v \Lambda
s(\alpha) \neq \emptyset$ for each $\alpha \in w \Lambda^{\leq
n}$.
\end{prop}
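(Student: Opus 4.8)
The plan is to prove the equivalence by passing back and forth between a finite exhaustive set $E \in w\FE(\Lambda)$ and a degree $n \in \N^k$ via the obvious translations: a finite $E$ produces the degree $n := \bigvee_{\alpha \in E} d(\alpha)$, while a degree $n$ produces the candidate set $w\Lambda^{\leq n}$. The two directions then hinge on quite different facts: the ``only if'' direction on the definition of $\Lambda^{\leq n}$, and the ``if'' direction on the factorisation Lemma~\ref{F} together with the good behaviour of $\Lambda^{\leq\cdot}$-paths in the row-finite locally convex setting.

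For the ``only if'' direction, suppose $\Lambda$ is cofinal and fix $v, w$. Choose $E \in w\FE(\Lambda)$ with $v\Lambda s(\alpha) \neq \emptyset$ for all $\alpha \in E$, and set $n := \bigvee_{\alpha \in E} d(\alpha)$, which is finite since $E$ is. Given $\beta \in w\Lambda^{\leq n}$, exhaustiveness of $E$ yields $\alpha \in E$ and $\gamma \in \MCE(\beta, \alpha)$, so that $d(\gamma) = d(\beta) \vee d(\alpha) \leq n$. I would then show $\gamma = \beta$: writing $\gamma = \beta\beta'$ with $\beta' = \gamma\big(d(\beta), d(\gamma)\big)$, if some $d(\beta')_i > 0$ then $s(\beta)\Lambda^{e_i} \neq \emptyset$, so the definition of $\Lambda^{\leq n}$ forces $d(\beta)_i = n_i$; but then $d(\gamma)_i = n_i = d(\beta)_i$, contradicting $d(\beta')_i > 0$. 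Hence $d(\beta') = 0$, so $\gamma = \beta$ and $\beta = \alpha\alpha'$ with $\alpha' = \gamma\big(d(\alpha), d(\gamma)\big)$. Composing any $\eta \in v\Lambda s(\alpha)$ with $\alpha'$ gives $\eta\alpha' \in v\Lambda s(\beta)$, so $v\Lambda s(\beta) \neq \emptyset$ as required. Note this direction uses neither row-finiteness nor local convexity.

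For the ``if'' direction, fix $v, w$, take $n$ as in the hypothesis, and claim $E := w\Lambda^{\leq n}$ witnesses cofinality. That $v\Lambda s(\alpha) \neq \emptyset$ for every $\alpha \in E$ is precisely the hypothesis, and $E$ is finite because $E \subseteq \bigcup_{m \leq n} w\Lambda^m$ is a finite union of finite sets by row-finiteness. It remains to check that $E$ is exhaustive. Given $\mu \in w\Lambda$, I would first extend $\mu$ to $\bar\mu = \mu\mu' \in w\Lambda^{\leq (n \vee d(\mu))}$, using that $s(\mu)\Lambda^{\leq q}$ is nonempty for every $q$ and that concatenation of $\Lambda^{\leq\cdot}$-paths lands in the appropriate $\Lambda^{\leq\cdot}$ (both standard for row-finite locally convex $k$-graphs, from \cite{rsy1}). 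Applying Lemma~\ref{F} with first degree $n$ factors $\bar\mu = \lambda\nu$ with $\lambda = \bar\mu\big(0, n \wedge d(\bar\mu)\big) \in w\Lambda^{\leq n} = E$. Since $\mu$ and $\lambda$ are both initial segments of $\bar\mu$ and $d(\mu) \vee d(\lambda) \leq d(\bar\mu)$, the path $\bar\mu\big(0, d(\mu) \vee d(\lambda)\big)$ lies in $\MCE(\mu, \lambda)$, so $E$ is exhaustive and hence $E \in w\FE(\Lambda)$.

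The main obstacle I anticipate is the exhaustiveness step in the ``if'' direction: the factorisation via Lemma~\ref{F} is clean, but it rests on first extending an arbitrary $\mu$ to a $\Lambda^{\leq\cdot}$-path, and this is exactly the point at which local convexity and row-finiteness are needed (through nonemptiness of $s(\mu)\Lambda^{\leq q}$ and well-behaved concatenation). By contrast the identity $\gamma = \beta$ driving the ``only if'' direction, though it is the crux there, is an elementary consequence of the definition of $\Lambda^{\leq n}$ once one observes that $d(\gamma) \leq n$.
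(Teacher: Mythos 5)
Your proof is correct and takes essentially the same route as the paper's: in the forward direction you set $n = \bigvee_{\alpha \in E} d(\alpha)$ and use the definition of $\Lambda^{\leq n}$ to force any minimal common extension of $\beta \in w\Lambda^{\leq n}$ with an element of $E$ to equal $\beta$ itself, and in the reverse direction you show $w\Lambda^{\leq n} \in w\FE(\Lambda)$, with finiteness from row-finiteness and exhaustiveness from extending $\mu$ into $\Lambda^{\leq n \vee d(\mu)}$ (via local convexity) and then factoring with Lemma~\ref{F}. Both steps match the paper's argument, so there is nothing to add.
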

\begin{proof}
$(\Rightarrow)$: Suppose $\Lambda$ is cofinal. Fix $v, w \in
\Lambda^0$. Then there exists $E \in w\FE(\Lambda)$ such that
$v \Lambda s(\alpha) \neq \emptyset$ for each $\alpha \in E$.
We must show there exists $n \in \N^k$ such that $v \Lambda
s(\beta) \neq \emptyset$ for each $\beta \in w \Lambda^{\leq
n}$. Let $n = \bigvee_{\alpha \in E} d(\alpha)$ and fix $\beta
\in w\Lambda^{\leq n}$. Since $E$ is exhaustive there exists
$\alpha \in E$ such that $\MCE(\alpha, \beta) = \alpha \mu =
\beta \nu$ for some $\mu \in s(\alpha) \Lambda^{(d(\alpha) \vee
d(\beta)) - d(\beta)}$ and $\nu \in s(\alpha)
\Lambda^{(d(\alpha) \vee d(\beta)) - d(\alpha)}$. However since
$\beta \in \Lambda^{\leq n}$ we must have $\nu = s(\beta)$.
Hence $v \Lambda s(\alpha) \neq \emptyset$ and thus $v \Lambda
s(\beta) \supseteq v \Lambda s(\alpha) \mu \neq \emptyset$.

$(\Leftarrow)$: it suffices to show that each $w \Lambda^{\leq
n} \in w\FE(\Lambda)$. Since $\Lambda$ is row-finite we have
$|w \Lambda^{\leq n}| < \infty$. Fix $\alpha \in w \Lambda$ and
$\beta \in s(\alpha) \Lambda^{\leq (n \vee d(\alpha)) -
d(\alpha)}$. By \cite[Lemma~3.6]{rsy1} $\alpha \beta \in w
\Lambda^{\leq n \vee d(\alpha)} = w \Lambda^{\leq n + ((n \vee
d(\alpha)) - n)}$. Then by Lemma~\ref{F} there exists $\mu \in
\Lambda^{\leq n}$ and $\nu \in w \Lambda^{\leq (n \vee
d(\alpha)) - n}$ such that $\mu \nu = \alpha \beta$. Hence $\mu
\in \Lambda^{\leq n}$ satisfies $\MCE(\mu, \alpha) \neq
\emptyset$.
\end{proof}

\begin{remark}
If $\Lambda$ has no sources, then each $\Lambda^{\leq n} =
\Lambda^n$, so $\Lambda$ is cofinal if and only if for every
$v, w \in \Lambda^0$ there exists $n \in \N^k$ such that $v
\Lambda s(\alpha) \neq \emptyset$ for every $\alpha \in w
\Lambda^n$.
\end{remark}

\subsection{Arbitrary directed graphs}

We will use the following notation to characterise cofinality
in an arbitrary directed graph.

\begin{notation*}
Suppose $E$ is an arbitrary directed graph. For $n \in \N$ we
define
\begin{align*}
X_n := \big\{\lambda \in E^* : d(\lambda) \leq n, & |\lambda(m) E^1| < \infty \text{ for every } m < d(\lambda) \\
    & \text{and if } d(\lambda) < n \text{ then } |s(\lambda) E^1| \in \{ 0, \infty \} \big\}.
\end{align*}
\end{notation*}

\begin{prop}
Suppose $E$ is an arbitrary directed graph. Then $\Lambda$ is
cofinal if and only if for every $v, w \in E^0$ there exists $n
\in \N$ such that $v E^* s(\alpha) \neq \emptyset$ for each
$\alpha \in w X_n$.
\end{prop}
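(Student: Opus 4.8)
The plan is to regard the directed graph $E$ as the $1$-graph $E^*$ of finite paths, so that cofinality of $E$ means cofinality of $E^*$ and $\Lambda^0 = E^0$, $v\Lambda s(\alpha) = vE^*s(\alpha)$. The first thing I would record is the reduction of minimal common extensions in a $1$-graph: since degrees lie in the totally ordered semigroup $\N$, $\MCE(\mu,\nu)\neq\emptyset$ if and only if one of $\mu,\nu$ is an initial segment of the other. I will use this comparability reformulation throughout, and I will show separately that $wX_n$ is always a finite exhaustive set, which is what links $X_n$ to the definition of cofinality.

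The heart of the argument is the claim that $wX_n \in w\FE(E^*)$ for every $w\in E^0$ and $n\in\N$. Finiteness follows because the paths in $wX_n$ form a finitely-branching tree of depth at most $n$: one only ever extends through vertices emitting finitely many edges, and one stops on reaching length $n$. For exhaustiveness I would, given $\mu\in wE^*$, produce $\alpha\in wX_n$ comparable to $\mu$ by walking along $\mu$ from $w$ and truncating at the first intermediate vertex that emits infinitely many edges, or at length $n$, whichever comes first; should $\mu$ instead terminate early (length $<n$) at a vertex emitting finitely but positively many edges, I would extend $\mu$ greedily through such vertices until hitting length $n$, a sink, or an infinite emitter. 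In every case the resulting $\alpha$ is an initial segment or an extension of $\mu$, hence comparable to it, and a routine check confirms that it satisfies the three defining conditions of $X_n$.

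Granting the claim, the direction $(\Leftarrow)$ is immediate: the hypothesised $n$ makes $wX_n$ a finite exhaustive set with range $w$ witnessing cofinality. For $(\Rightarrow)$ I would fix $v,w$, take a finite exhaustive $F\subseteq wE^*$ with $vE^*s(\alpha)\neq\emptyset$ for all $\alpha\in F$ supplied by cofinality, and set $n=\max_{\alpha\in F} d(\alpha)$. It then suffices to show that every $\beta\in wX_n$ has some $\alpha\in F$ as an initial segment, for then, writing $\beta=\alpha\gamma$, any path in $vE^*s(\alpha)=vE^*r(\gamma)$ followed by $\gamma$ lies in $vE^*s(\beta)$.

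The step I expect to be the main obstacle is exactly this last point, because exhaustiveness of $F$ applied to $\beta$ only guarantees some $\alpha\in F$ comparable to $\beta$, and a priori this $\alpha$ could strictly extend $\beta$ rather than sit inside it. I would rule this out as follows: if no element of $F$ is an initial segment of $\beta$, then any $\alpha\in F$ comparable to $\beta$ strictly extends it, so $d(\beta)<d(\alpha)\le n$ and the definition of $X_n$ forces $s(\beta)$ to be a sink or an infinite emitter; it is not a sink since $\alpha$ runs past it, so $s(\beta)E^1$ is infinite. For each edge $f\in s(\beta)E^1$, applying exhaustiveness to $\beta f$ produces $\alpha_f\in F$ comparable to $\beta f$, and since $\alpha_f$ is not an initial segment of $\beta$ by assumption, a short case check shows $\alpha_f$ must carry $f$ as its edge in position $d(\beta)$; hence $f\mapsto\alpha_f$ is injective, contradicting finiteness of $F$. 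This pigeonhole against the infinitely many edges out of an infinite emitter is the crux of the whole proposition.
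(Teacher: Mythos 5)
Your proposal is correct and follows essentially the same route as the paper's proof: for the reverse direction both show that $wX_n$ is a finite exhaustive set, and for the forward direction both take $n = \max_{\alpha \in F} d(\alpha)$ and show that every $\beta \in wX_n$ has an element of $F$ as an initial segment. Your pigeonhole at the infinite emitter $s(\beta)$ (infinitely many edges against the finite set $F$) is a cleaner, more explicit rendering of the contradiction the paper extracts from its choice of a minimal-length element of $F$ compatible with $\beta$, but the underlying mechanism is the same.
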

\begin{proof}
$(\Rightarrow)$: Suppose $\Lambda$ is cofinal. Fix $v, w \in
\Lambda^0$. Then there exists $F \in w\FE(E)$ such that $v E^*
s(\alpha) \neq \emptyset$ for every $\alpha \in F$. Let $n =
\max \{ |\alpha| : \alpha \in F \}$. Consider $w (X_n)$. We
show that $v E^* s(\mu) \neq \emptyset$ for every $\mu \in w
(X_n)$. Fix $\mu \in w (X_n)$. Then there exists a smallest
$\nu \in F$ such that $\MCE(\mu, \nu) \neq \emptyset$.

We claim that $|\mu| \geq |\nu|$. Suppose for contradiction
that $|\mu| < |\nu|$. Then $|\nu (m) E^1| < \infty$ for every
$m < |\nu|$; for if $m < |\nu|$ such that $|\nu (m) E^1| =
\infty$ then there exists $\lambda \in \nu (m) E^1$ such that
$\nu(0, |m|) \lambda \notin F$, which is impossible because
$\nu$ is minimal in $F$ with $\MCE(\mu, \nu) \neq \emptyset$.
In particular $|s(\mu) E^1| < \infty$. However, because $|\mu|
< |\nu| \leq n$, by definition of $X_n$ either $s(\mu) E^1$ is
either empty or infinite. Clearly $|s(\mu) E^1| \neq 0$ so
$|s(\mu) E^1| = \infty$ giving a contradiction. Now since
$|\mu| \geq |\nu|$ we have $\MCE(\mu, \nu) = \mu = \nu
\mu(|\nu|, |\mu|)$ and it follows from $v E^* s(\nu) \neq
\emptyset$ that $v E^* s(\mu) \neq \emptyset$.

$(\Leftarrow)$: Fix $v, w \in E^0$. Then there exists $n \in
\N$ such that $v E^* s(\alpha) \neq \emptyset$ for each $\alpha
\in w (X_n)$. By definition of $X_n$ we have $|X_n| < \infty$.
We claim that $w(X_n)$ is exhaustive. Suppose for contradiction
that $w (X_n)$ is not exhaustive. Then there exists $\lambda
\in wE^*$ such that $\MCE(\beta, \lambda) = \emptyset$ for
every $\beta \in w (X_n)$. Since $r(\lambda) = w$ there exists
a smallest $m \leq n$ such that $\lambda(0, m) = \beta(0, m)$
for some $\beta \in w X_n$ but $\lambda(m, m + 1) \neq \mu
\big(m, (m + 1) \wedge d(\mu) \big)$ for every $\mu \in w
(X_n)$. We consider two cases: when $m = n$ and when $m < n$.

We first consider the case when $m = n$. Then $\beta(m) =
s(\beta)$ and since $\beta(m) E^* s(\lambda) \neq \emptyset$ it
follows that $\MCE(\beta, \lambda) = \lambda$. Now suppose $m <
n$. Then $m + 1 \leq n$ and hence by definition of $X_n$ we
have $|\beta(m) E^1| = \infty$ and hence $|\beta| = m$ and
again it follows that $\MCE(\beta, \lambda) = \lambda$. Hence
$w X_n$ is exhaustive.
\end{proof}


\begin{thebibliography}{00}

\bibitem{bhrs} T. Bates, J. H. Hong, I. Raeburn and W.
    Szyma\`nski, The ideal structure of the $C^*$-algebras of
    infinite graphs, \textit{Illinois J. Math.} \textbf{46}
    (2002), no. 4, 1159--1176.

\bibitem{bprs} T. Bates, D. Pask, I. Raeburn and W.
    Szyma\`nski, The $C\sp *$-algebras of row-finite graphs,
    \textit{New York J. Math.} \textbf{6} (2000), 307--324.

\bibitem{burgstaller} B. Burgstaller, The uniqueness of
    Cuntz-Krieger type algebras, \emph{J. reine angew. Math.}
    \textbf{594} (2006), 207--236.

\bibitem{ck} J. Cuntz and W. Krieger, A class of $C^*$-algebras
    and topological Markov chains, \textit{Invent. Math.}
    \textbf{56} (1980), 251--268.

\bibitem{dpy} K.R. Davidson, S. Power and D. Yang, Atomic
    representations of rank 2 graph algebras, \textit{J. Funct.
    Anal.} \textbf{255} (2008), no. 4, 819--853.

\bibitem{dmmc} M.A. Dritschel, S. Marcantognini and S.
    McCullough, Interpolation in semigroupoid algebras,
    \textit{J. reine angew. Math.} \textbf{606} (2007), 1--40.

\bibitem{ew} M. Enomoto and Y. Watatani, A graph theory for
    $C^*$-algebras, \emph{Math. Japon.} \textbf{25} (1980),
    435--442.

\bibitem{exel} R. Exel, Inverse semigroups and combinatorial
    $C^*$-algebras, \emph{Bull. Braz. Math. Soc. (N.S.)}
    \textbf{39} (2008), no. 2, 191--313.

\bibitem{el} R. Exel and M. Laca, Cuntz-Krieger algebras for
    infinite matrices, \textit{J. reine angew. Math.}
    \textbf{512} (1999), 119--172.

\bibitem{farthing} C. Farthing, Removing sources from
    higher-rank graphs, \emph{J. Operator Theory}, \textbf{60}
    (2008), 165--198.

\bibitem{fmy} C. Farthing, P. S. Muhly and T. Yeend,
    Higher-rank graph $C^*$-algebras: an inverse semigroup and
    groipoid approach, \emph{Semigroup Forum} \textbf{71}
    (2005), no. 2, 159--187.

\bibitem{flr} N.J. Fowler, M. Laca and I. Raeburn, The
    $C^*$-algebras of infinite graphs, \emph{Proc. Amer. Math.
    Soc.} \textbf{128} (2000), no. 8, 2319--2327.

\bibitem{hms} P.M. Hajac, R. Matthes and W. Szyma\`nski, A
    locally trivial quantum Hopf fibration, \textit{Algebr.
    Represent. Theory} \textbf{9} (2006), no. 2, 121--146.

\bibitem{kk} E. Katsoulis and D.W. Kribs, The $C^*$-envelope
    of the tensor algebra of a directed graph, \emph{Integral
    Equations Operator Theory} \textbf{56} (2006), no. 3,
    401--414.

\bibitem{katsura} T. Katsura, A class of $C^*$-algebras
    generalising both graph algebras and homeomorphism
    $C^*$-algebras I. Fundamental results, \textit{Trans. Amer.
    Math. Soc.} \textbf{356} (2004), no. 11, 4287--4322.

\bibitem{kp} A. Kumjian and D. Pask, Higher rank graph
    $C^*$-algebras, \emph{New York J. Math.} \textbf{6} (2000),
    1--20.

\bibitem{kpr} A. Kumjian, D. Pask, and I. Raeburn,
    Cuntz-Krieger algebras of directed graphs, \emph{Pacific J.
    Math.} \textbf{184} (1998), no. 1, 161--174.

\bibitem{kprr} A. Kumjian, D. Pask, I. Raeburn and J. Renault,
    Graphs, groupoids, and Cuntz-Krieger algebras, \emph{J.
    Funct. Anal.} \textbf{144} (1997), no. 2, 505--541.

\bibitem{mt} P.S. Muhly and M. Tomforde, Topological quivers,
    \emph{Internat. J. Math.} \textbf{16} (2005), 693--756.

\bibitem{pqr} D. Pask, J. Quigg and I. Raeburn, Coverings of
    $k$-graphs, \textit{J. Algebra} \textbf{289} (2005), no. 1,
    161--191.

\bibitem{rsy1} I. Raeburn, A. Sims and T. Yeend, Higher-rank
    graphs and their $C^*$-algebras, \emph{Proc. Edinburgh
    Math. Soc.} $\mathbf{46}$ (2003), 99--115.

\bibitem{rsy2} I. Raeburn, A. Sims and T. Yeend, The
    $C^*$-algebras of finitely aligned higher rank graphs,
    \emph{J. Funct. Anal.} $\mathbf{213}$ (2004), 206--240.

\bibitem{robsi1} D. Robertson and A. Sims, Simplicity of
    $C^*$-algebras associated to higher-rank graphs,
    \emph{Bull. London Math. Soc.} $\mathbf{39}$ (2007),
    337--344.

\bibitem{robsi2} D. Robertson and A. Sims, Simplicity of
    $C^*$-algebras associated to row finite locally convex
    higher-rank graphs, \emph{Israel J. Math.} to appear.

\bibitem{rob-steg} G. Robertson and T. Steger, Affine
    buildings, tiling systems and higher rank Cuntz-Krieger
    algebras, \emph{J. reine. angew. Math.} \textbf{513}
    (1999), 115--144.

\bibitem{shotwell} J. Shotwell, Simplicity of finitely-aligned
    $k$-graph $C^*$-algebras, preprint (arXiv:0810.4567 [Math
    OA]).

\bibitem{ASPhD} A. Sims, $C^*$-algebras associated to
    higher-rank graphs, PhD Thesis, University of Newcastle,
    2003.

\bibitem{sims1} A. Sims, Relative Cuntz-Krieger algebras of
    finitely aligned higher-rank graphs, \emph{Indiana Univ.
    Math. J.} $\mathbf{55}$ (2006), 849--868.

\bibitem{sims2} A. Sims, Gauge-invariant ideals in the
    $C^*$-algebras of finitely aligned higher-rank graphs,
    \emph{Canad. J. Math.} $\mathbf{58}$ (2006), 1268--1290.

\bibitem{sz} A. Skalski and J. Zacharias, Entropy of shifts on
    higher-rank graph $C^*$-algebras, \textit{Houston J. Math.}
    \textbf{34} (2008), no. 1, 269--282.

\bibitem{spielberg} J. Spielberg, Graph-based models for
    Kirchberg algebras, \textit{J. Operator Theory} \textbf{57}
    (2007), no. 2, 347--374.

\bibitem{tomforde} M. Tomforde, A unified approach to Exel-Laca
    algebras and $C^*$-algebras associated to graphs, \emph{J.
    Operator Theory} \textbf{50} (2003), no.~2, 345--368.

\end{thebibliography}
\end{document}